\theoremstyle{plain}
{
  \newtheorem{thm}{Theorem}[section]
  \newtheorem{defn}[thm]{Definition}
  \newtheorem{cor}[thm]{Corollary}
  \newtheorem{lem}[thm]{Lemma}
  \newtheorem{prop}[thm]{Proposition}
  \newtheorem{rem}[thm]{Remark}
  \newtheorem{clm}[thm]{Claim}
  \newtheorem{notation}[thm]{Notation}

}
\renewcommand{\subsubsection}{\sssection\rm}
\newcommand{\can}{\text{\rm can}}
\newcommand{\id}{\text{\rm id}}
\newcommand{\pr}{\text{\rm pr}}
\newcommand{\red}{\text{\rm red}}
\newcommand{\inc}{\text{\rm inc}}
\newcommand{\const}{\text{\rm const}}
\newcommand{\Spec}{\text{\rm Spec}}
\newcommand{\Ker}{\text{\rm Ker}}
\newcommand{\Aff}{\mathbf {A}}
\newcommand{\Pro}{\mathbf {P}}
\newcommand \xra {\xrightarrow }
\newcommand \hra {\hookrightarrow }
\newcommand{\ttf}{{\text{f}}}
\newcommand\mydim{\text{\rm dim}}
\renewcommand \id{\operatorname{id}}
\renewcommand \phi\varphi
\newcommand{\ad}{ad}
\newcommand{\et}{\text{\rm\'et}}
\newcommand{\R}{{\rm R}}
\newcommand{\ZZ}{\mathbb Z}
\begin{document}

\title
{On Grothendieck---Serre's conjecture concerning
principal $G$-bundles over reductive group schemes: I}

\author{I.~Panin\footnote{The author acknowledges support of the RFBR grant 13-01-00429-a}\ ,
A.~Stavrova\footnote{The author acknowledges support of
the J.E. Marsden postdoctoral fellowship of the Fields Institute for Research in Mathematical Sciences,
the RFBR grants 12-01-33057, 12-01-31100, 10-01-00551, 09-01-00878, and of the research program 6.38.74.2011 ``Structure theory and geometry
of algebraic groups and their applications in representation theory and algebraic K-theory'' at St.
Petersburg State University.}\ ,
N.~Vavilov\footnote{The author acknowledges support of the
RFFI projects 08-01-00756, 09-01-00762, 09-01-00784, 09-01-00878
and 09-01-91333} }


\maketitle

\begin{abstract}





Let $k$ be an infinite field. Let $R$ be the semi-local ring of a finite family of closed points on a $k$-smooth
affine irreducible variety and let $K$ be the fraction field of $R$ and let $G$ be a
reductive simple simply connected $R$-group scheme isotropic over $R$.
Our Theorem \ref{MainThmGeometric} states that for any Noetherian $k$-algebra $A$
the kernel of the map
$$ H^1_{\text{\'et}}(R \otimes_k A,G)\to H^1_{\text{\'et}}(K \otimes_k A,G) $$
\noindent
induced by the inclusion of $R$ into $K$ is trivial.
Theorem \ref{MainHomotopy} for $A=k$ and some other results of the present paper are used significantly in \cite{FP}
to prove the Grothendieck-Serre's conjecture for regular semi-local rings $R$ containing an infinite field.

\end{abstract}

\section{Introduction}
\label{Introduction}

Recall that an $R$-group scheme $G$ is called reductive
(respectively, semi-simple or simple), if it is affine and smooth
as an $R$-scheme and if, moreover, for each ring homomorphism
$s:R\to\Omega(s)$ to an algebraically closed field $\Omega(s)$,
its scalar extension $G_{\Omega(s)}$
is a connected reductive (respectively, semi-simple or simple) algebraic
group over $\Omega(s)$. The class of reductive group schemes
contains the class of semi-simple group schemes which in turn
contains the class of simple group schemes. This notion of a
simple $R$-group scheme coincides with the notion of a {simple
semi-simple $R$-group scheme from Demazure---Grothendieck
\cite[Exp.~XIX, Defn.~2.7 and Exp.~XXIV, 5.3]{SGA3}.} {\it
Throughout the paper $R$ denotes an integral domain and $G$
denotes a semi-simple $R$-group scheme, unless explicitly stated
otherwise. All commutative rings that we consider are assumed to be Noetherian.}
\par
A semi-simple $R$-group scheme $G$ is called {\it simply connected\/}
(respectively, {\it adjoint\/}), provided that for an inclusion
$s:R\hra\Omega(s)$ of $R$ into an algebraically closed field
$\Omega(s)$ the scalar extension $G_{\Omega(s)}$ is a
simply connected (respectively, adjoint) $\Omega(s)$-group scheme.
This definition coincides with the one from \cite[Exp.~XXII.
Defn.~4.3.3]{SGA3}.
\par
\par
A well-known conjecture due to J.-P.~Serre and A.~Grothendieck
\cite[Remarque, p.31]{Se},
\cite[Remarque 3, p.26-27]{Gr1},
and
\cite[Remarque 1.11.a]{Gr2}
asserts that given a regular local ring $R$ and its field of
fractions $K$ and given a reductive group scheme $G$ over $R$ the
map
$$ H^1_{\text{\'et}}(R,G)\to H^1_{\text{\'et}}(K,G), $$
\noindent
induced by the inclusion of $R$ into $K$, has trivial kernel. The
following theorem, which is one of the main result of the present paper,
asserts that for simple and simply connected isotropic group schemes over certain rings $R$
this is indeed the case
({\it recall that a simple
$R$-group scheme is called isotropic if it contains a split torus}
$\Bbb G_{m,R}$). Actually, we prove something significantly stronger. Namely,

\begin{thm}
\label{MainThmGeometric} Let $k$ be an infinite field. Let
$\mathcal O$ be the semi-local ring of finitely many closed points on a
$k$-smooth irreducible affine $k$-variety $X$ and let $K$ be its field of
fractions.
Let $G$ be an isotropic simple simply connected group
scheme over $\mathcal O$.
Then for any Noetherian $k$-algebra $A$
the map
$$ H^1_{\text{\rm\'et}}(\mathcal O \otimes_k A,G)\to H^1_{\text{\rm\'et}}(K \otimes_k A,G), $$
\noindent
induced by the inclusion $\mathcal O$ into $K$, has trivial kernel.
\end{thm}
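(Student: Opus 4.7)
The plan is to prove Theorem \ref{MainThmGeometric} by the geometric techniques that this paper combines from \cite{R1}, \cite{OP1}, and \cite{Pa1}: elementary fibrations, Nisnevich patching, and Raghunathan's affine-line homotopy. First I would spread out: write $\mathcal{O}$ as the semi-local ring of the chosen points $x_1,\dots,x_n$ on a smooth affine irreducible $k$-variety $X=\Spec(A)$; a torsor $\xi\in H^1_{\text{\rm\'et}}(\mathcal{O},G)$ trivial over $K$ extends to a $G$-torsor $\Xi$ on a Zariski open neighborhood $U\subseteq X$ of the $x_i$, and $\Xi$ itself becomes trivial on a dense open $V\subseteq U$. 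Both the reductive group scheme $G$ and its distinguished split subgroup $\Bbb G_{m,\mathcal{O}}\hra G$ also spread out over $U$ after shrinking. Put $D:=U\smallsetminus V$, the closed locus where the given trivialization fails.

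Next I would apply the Ojanguren--Panin geometric presentation \cite{OP1}: after shrinking $U$ further to $X^0\ni\{x_i\}$, construct an elementary fibration $p\colon X^0\to S$ over an affine open subscheme $S\subseteq\Aff^{d-1}_k$ (with $d=\dim X$) such that the divisor $D^0:=D\cap X^0$ is finite and, after possibly shrinking $S$, \'etale over $S$. The infiniteness of $k$ supplies the Bertini-type hyperplanes needed here. Combining this with a finite morphism $X^0\to\Aff^1_S$ obtained via Quillen's trick and a Nisnevich-patching square around $D^0$, one reduces the problem to the following local statement: any $G$-torsor on $\Aff^1_{\mathcal{O}_S}$ that is trivial on the complement of a principal divisor and trivial at the section $t=1$ is already trivial at $t=0$; here $\mathcal{O}_S$ denotes the semi-local ring of $S$ at the images $p(x_i)$.

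This last statement is where the isotropy hypothesis on $G$ finally does its work, via Raghunathan's homotopy method \cite{R1} and its refinements in \cite{Pa1}. Since $G$ contains a split $\Bbb G_{m,\mathcal{O}}$, it admits a proper parabolic subgroup $P$ with nontrivial unipotent radical; the corresponding big cell in $G$ is an affine open containing the identity, and one uses this cell together with the unipotent radical of the opposite parabolic to construct an explicit $\Aff^1$-family of trivializations carrying the given one from $t=1$ back to $t=0$, thereby producing the required trivialization of $\Xi$ over $\mathcal{O}$. The main obstacle, in my view, is the second step: arranging the elementary fibration to simultaneously contain the given points $x_i$, place the bad divisor $D$ in good position, and keep the isotropy of $G$ visible over $X^0$ demands a careful Bertini-type argument that requires $k$ infinite but---and this is the whole point of separating Theorems \ref{MainThm} and \ref{MainThmGeometric}---does not require $k$ perfect.
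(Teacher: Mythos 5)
Your overall architecture (spread out, elementary fibration, finite morphism to $\Aff^1$, Nisnevich patching, then an affine-line argument exploiting isotropy) matches the paper's outline, but two essential ingredients are missing, and without them the middle and final steps fail. First, the patching step does not produce a torsor under a \emph{constant} group scheme unless you first arrange this. The group scheme $G$ lives over $\mathcal O$ (or over $X$ after spreading out) and does \emph{not} descend along the finite morphism $\sigma:\mathcal X\to\Aff^1\times U$; so the glued object on $\Aff^1\times U$ is a priori not a torsor under any group at all. The paper resolves this with Theorem \ref{EquatingGroups}: one passes to an \'etale neighborhood $\mathcal X'$ of $\{f=0\}\cup\Delta(U)$ on which $q_X^*(G)$ becomes isomorphic to the pullback $G_{\const}$ of $G_U$, compatibly with the section $\Delta$. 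This "equating groups" step (an analogue of \cite[Prop.~7.1]{OP}) is the reason nice triples and their morphisms are introduced, and it is nowhere in your sketch. Relatedly, your reduction lands on $\Aff^1_{\mathcal O_S}$ with $\mathcal O_S$ the semi-local ring of $S$; but $G$ is not defined over $\mathcal O_S$, and the correct base is $U=\Spec(\mathcal O)$ itself, obtained via the fibre product $\mathcal X=U\times_S X$.

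Second, the "group part" is asserted rather than proved, and the mechanism you propose is not the one that works. The statement actually needed (Theorem \ref{TrivialityOfP_t}) is that a $G$-bundle over $\mathcal O[t]$ trivial over $\mathcal O[t]_{h}$ for a monic $h$ is trivial, and its proof is not an "$\Aff^1$-family of trivializations from $t=1$ to $t=0$" built from the big cell. It runs through $\Pro^1$: one glues in a trivial bundle near $\infty$, uses Raghunathan--Ramanathan over each residue field $l_i$ to trivialize on $\Aff^1_{l}$, then invokes Raghunathan's decomposition $G_l(l(t^{-1}))=G_l(l(t^{-1}))^{+}\cdot G_l(l[t^{-1}]_{(t^{-1})})$ (Proposition \ref{Key}, resting on \cite[Cor.~1.7]{R1} and the Borel--Tits split subgroup $H_l$) to write the transition function as $u\beta$ with $u$ in the elementary subgroup; isotropy enters precisely here and again in Lemma \ref{Surjectivity}, which lifts $u$ from $E_{P_l}(l[t]_{t\bar f\bar h})$ to $E_{P_{\mathcal O}}(\mathcal O[t]_{tfh})$ because unipotent radicals are vector bundles. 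After this modification the bundle on $\Pro^1_{\mathcal O}$ is trivial on the closed fibre, and a separate rigidity statement (Theorem \ref{ConstantOnFibreIsConstant}) shows it is extended from $\Spec(\mathcal O)$, hence trivial. None of these steps is supplied or replaced by your big-cell homotopy, so as written the proposal has a genuine gap at its final and decisive stage.
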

In other words, under the above assumptions on $\mathcal O$ and $G$ each
principal $G$-bundle $P$ over $\mathcal O \otimes_k A$ which is trivial over $K \otimes_k A$
is itself trivial.
In the case $A=k$ the main result of \cite{FP} is much stronger,
since there are no anisotropy assumptions on $G$ there. However, it seems that there is no reason
to expect that Theorem \ref{MainThmGeometric} holds for anisotropic $G$.

Theorem~\ref{MainThm} extends easily to the case of simply connected semi-simple
group schemes, using the Faddeev---Shapiro lemma. However, in this generality its statement
is a bit more technical and we postpone it till
Section~\ref{SectSemi-Simple} (see Theorem~\ref{MainThmGeometricSemi-Simple}).
All other results stated below extend to semi-simple simply connected group schemes as well.

Theorem \ref{MainThmGeometric} is an easy consequence of the following two theorems.
\begin{thm}
\label{MainHomotopy}
Let $k$, $\mathcal O$, $K$, $A$ be the same as in Theorem \ref{MainThmGeometric}.
Let $G$ be a not necessarily isotropic simple simply connected group
scheme over $\mathcal O$.
Let $\mathcal G$ be a principal $G$-bundle over $\mathcal O \otimes_k A$ which is trivial
over $K \otimes_k A$.
Then there exists a principal $G$-bundle $\mathcal G_t$ over
$\mathcal O[t] \otimes_k A$
and a monic polynomial
$f(t) \in \mathcal O[t]$
such that
\par
(i) the $G$-bundle $\mathcal G_t$ is trivial over $(\mathcal O[t]_f) \otimes_k A$,
\par
(ii) the evaluation of $\mathcal G_t$ at $t=0$ coincides
with the original $G$-bundle $\mathcal G$,
\par
(iii) $f(1) \in \mathcal O$ is invertible in $\mathcal O$.

\end{thm}

\begin{thm}\label{MainThmArithmetic}
Let $k$ be a not necessarily infinite field.
Let $B$ be a Noetherian $k$-algebra. Let $G$ be an isotropic simple simply connected group
scheme over $B$, that is $G$ contains a torus $\mathbb G_{m,B}$.
Let $P_t$ be a principal $G$-bundle over
$B[t]$
and let
$h(t) \in B[t]$
a monic polynomial
such that
\par
(i) the $G$-bundle $P_t$ is trivial over $B[t]_h$,
\par
(ii) $h(1) \in B$ is invertible in $B$.\\
Then the principal $G$-bundle $P_t$ is trivial.
\end{thm}
\begin{rem}
To prove Theorem \ref{MainThmGeometric} one needs to substitute in Theorem \ref{MainThmArithmetic}
$B=\mathcal O \otimes_k A$, $P_t:=\mathcal G_t$, $h(t)=f(t) \otimes 1$
from Theorem \ref{MainHomotopy}. By Theorem
\ref{MainThmArithmetic}
the $G$-bundle $\mathcal G_t$ is trivial.
Now by the item (ii) of Theorem \ref{MainHomotopy} the original $G$-bundle $\mathcal G$ is trivial.
\end{rem}
\begin{rem}
Theorem \ref{MainThmArithmetic} does not hold in the case of anisotropic $G$ even for $B=\mathcal O$
with $\mathcal O$ as in Theorem \ref{MainHomotopy}.
There are various conterexamples. Only a weaker form of Theorem \ref{MainThmArithmetic} holds in the case of anisotropic $G$ and $B=\mathcal O$
as it is proved in~\cite[Thm. 2]{FP}. This is why we are skeptical that Theorem \ref{MainThmGeometric} holds in the anisotropic case.
\end{rem}
\begin{thm}
\label{MainThm}
Let $R$ be a regular semi-local domain containing an infinite field and let $K$ be
the fraction field of $R$.
Let $G$ be an isotropic simple simply connected group
scheme over $R$,
containing a split rank $1$ torus $\Bbb G_{m,R}$.
Then for any Noetherian commutative ring $A$
the map
$$ H^1_{\text{\rm\'et}}(R \otimes_{\ZZ} A,G)\to H^1_{\text{\rm\'et}}(K \otimes_{\ZZ} A,G), $$
\noindent
induced by the inclusion $R$ into $K$, has trivial kernel.
\end{thm}



Combining Theorem~\ref{MainThm} with the well-known result of M.S. Raghunathan and A. Ramanathan~\cite{RR}, we obtain
the following

\begin{cor}\label{HominvarCor}
Let $R$ be a regular domain containing $\mathbb{Q}$.
Let $G$ be an isotropic simple simply connected group
scheme over $R$, containing a split rank $1$ torus $\Bbb G_{m,R}$.
Then the map
$$
H^1_{\et}(R[t],G)\to H^1_{\et}(R,G),
$$
induced by evaluation at $t=0$, has trivial kernel.
\end{cor}

To put these statements into context, let us recall other known results on the Serre---Grothen\-dieck conjecture.
\par\smallskip
$\bullet$ The case where the group scheme $G$ comes from the
ground field $k$ is completely solved by
J.-L.~Colliot-Th\'el\`ene, M.~Ojanguren, M.~S.~Raghunatan and
O.~Gabber: in \cite{C-TO} and \cite{R1}, \cite{R2} when
$k$ is infinite;
O.~Gabber \cite{Ga} announced a proof for an
arbitrary ground field $k$.
\par\smallskip
$\bullet$ The case of an arbitrary reductive group scheme over a
discrete valuation ring is completely solved by Y.~Nisnevich in
\cite{Ni}.
\par\smallskip
$\bullet$ The case where $G$ is an arbitrary torus over a regular
local ring was settled by J.-L.~Colliot-Th\'{e}l\`{e}ne and
J.-J.~Sansuc in \cite{C-T-S}.
\par\smallskip
$\bullet$
For most simple group schemes of classical series the Serre---Grothendieck conjecture was solved in
works of the first author, A. Suslin, M. Ojanguren and K. Zainoulline
\cite{PS}, \cite{OP1}, \cite{Z}, \cite{OPZ}.
In fact, unlike our Theorem \ref{MainThm}, {\it no isotropy
hypotheses} was imposed there.
\par\smallskip
$\bullet$
The first author, the second author, and V. Petrov
proved the Serre---Grothendieck
conjecture for strongly inner adjoint groups of type $E_6$ or $E_7$~\cite{PPS}, and for groups of type
$F_4$ with trivial $f_3$ invariant~\cite{PS-f4}, under the same assumptions on $R$.
\par\smallskip
$\bullet$
V. Chernousov~\cite{Ch} established the Serre---Grothendieck conjecture for groups of type $F_4$ with
trivial $g_3$ invariant, under the assumption that $R$ is a regular local ring containing a field of charactersitic $0$.
\par\smallskip
$\bullet$
In \cite{Pa2} the first author reduced the Serre---Grothendieck conjecture to the case of semi-simple simply connected
group schemes (assuming that $R$ is the semilocal ring of finitely many closed points on a $k$-smooth affine variety with
an infinite field $k$).
\par\smallskip
$\bullet$
R. Fedorov and the first author in~\cite{FP} prove the Serre---Grothendieck conjecture
for an arbitrary reductive group scheme over a semilocal regular ring containing an infinite field.
Their work relies heavily on the results of the present paper and of~\cite{Pa2}.

\medskip
The authors would like to thank Vladimir Chernousov, 
Philippe Gille, Victor Petrov, and Konstantin Pimenov for useful discussions on the 
subject of the present paper.

\section{Almost elementary fibrations}
\label{ElementaryFibrations}

In this Section we modify a result of M. Artin from~\cite{LNM305} concerning existence of nice neighborhoods.
The following notion is a modification of the one introduced by Artin in~\cite[Exp. XI, D\'ef. 3.1]{LNM305}.
\begin{defn}
\label{DefnElemFib} An \emph{almost elementary fibration} over a scheme $S$ is a morphism of
schemes $p:X\to S$ which can be included in a commutative diagram
\begin{equation}
\label{SquareDiagram}
    \xymatrix{
     X\ar[drr]_{p}\ar[rr]^{j}&&
\overline X\ar[d]_{\overline p}&&Y\ar[ll]_{i}\ar[lld]_{q} &\\
     && S  &\\    }
\end{equation}
of morphisms satisfying the following conditions:
\begin{itemize}
\item[{\rm(i)}] $j$ is an open immersion dense at each fibre of
$\overline p$, and $X=\overline X-Y$;
\item[{\rm(ii)}]
$\overline p$ is smooth projective all of whose fibres are geometrically
irreducible of dimension one;
\item[{\rm(iii)}] $q$ is a finite flat morphism all of whose fibres are non-empty;
\item[\rm(iv)] the morphism $i$ is a closed embedding and
the ideal sheaf
$I_Y \subset \mathcal O_{\overline X}$ defining the closed subscheme $Y$ in $\overline X$
is locally principal.
\end{itemize}
\end{defn}

\begin{rem}
This definition is motivated by the following example. Take a field $k$ and $S=\Spec(k)$, take
$\overline X= \Pro^1_k$, take a closed point $y \in \Pro^1_k$ and set $X=\Pro^1_k - \{y\}$, $Y=y$.
Then the structure morphism $X \to S$ is an almost elementary fibration. If the field extension
$k(y)/k$ is purely inseparable, then $X \to S$ is not an elementary fibration in the sense of
Artin~\cite[Exp. XI, D\'ef. 3.1]{LNM305}.
\end{rem}




We prove the following result, which is a
slight modification of Artin's result~\cite[Exp. XI, Prop. 3.3]{LNM305}.

\begin{prop}
\label{ArtinsNeighbor} Let $k$ be an infinite field, $X$ be a
smooth geometrically irreducible variety over $k$, $x_1,x_2,\dots,x_n\in X$
be closed points. Then there exists a Zariski open neighborhood
$X^0$ of the family $\{x_1,x_2,\dots,x_n\}$ and an almost elementary
fibration $p:X^0\to S$, where $S$ is an open subscheme of the
projective space $\Pro^{\mydim X-1}$.
\par
If, moreover, $Z$ is a closed co-dimension one subvariety in $X$,
then one can choose $X^0$ and $p$ in such a way that $p|_{Z\bigcap
X^0}:Z\bigcap X^0\to S$ is finite surjective.
\end{prop}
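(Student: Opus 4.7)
The approach follows Artin's classical strategy, substituting the ``pointed'' Bertini-type Theorem \ref{GeneralSection} for the usual Bertini theorem in order to force the open set $X^0$ to contain the prescribed points $x_1,\dots,x_n$. First I embed $X$ as a locally closed subscheme of some $\Pro^N_k$ with projective closure $\overline X$ and boundary $Y_\infty=\overline X\setminus X$. The singular loci $\overline X^{\mathrm{sing}}$ and $Y_\infty^{\mathrm{sing}}$ are closed of dimensions strictly less than those of $\overline X$ and $Y_\infty$, since $X$ itself is smooth. The goal is to exhibit a linear projection from a carefully chosen center that avoids these bad loci and induces an elementary fibration near $\{x_1,\dots,x_n\}$.

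Next I apply Theorem \ref{GeneralSection} iteratively to the locally closed strata $\overline X$, $Y_\infty$, $\overline X^{\mathrm{sing}}$, $Y_\infty^{\mathrm{sing}}$, and---for the addendum concerning $Z$---to $Z$ itself, with $r=\mydim\overline X$ and $s=r-1$. Choosing the degree $d$ large enough that the images of $\{x_1,\dots,x_n\}$ under the Veronese embedding $v_d\colon\Pro^N\hra\Pro^M$ are linearly independent, the theorem supplies $r-1$ sufficiently general degree-$d$ hypersurfaces $H_1,\dots,H_{r-1}$ passing through every $x_i$ such that their common intersection $Y$ meets $\overline X$ transversally at smooth points, $Y\cap\overline X$ is geometrically irreducible, $Y\cap Y_\infty$ is the transversal intersection of $Y$ with a smooth variety of complementary dimension (hence \'etale over the base of the forthcoming projection), and, when $Z$ is present, $Y\cap Z$ meets every component of $Z$.

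The linear system spanned by $H_1,\dots,H_{r-1}$ defines, via the Veronese, a rational map $\overline X\dashrightarrow S\subset\Pro^{r-1}$ whose indeterminacy lies in $Y$. Shrinking $X$ to the open subscheme $X^0$ obtained by removing $\overline X^{\mathrm{sing}}$ together with the closed locus where the induced map fails to satisfy (i)--(iii) of Definition \ref{DefnElemFib}, one still keeps every $x_i$ in $X^0$ thanks to the ``through prescribed points'' feature of Theorem \ref{GeneralSection}, and obtains a morphism $p\colon X^0\to S$. Standard verifications, using transversality, irreducibility and \'etaleness, then show that after a suitable smooth projective completion $\overline p\colon\overline{X^0}\to S$ the triple $(X^0,\overline{X^0},\overline{X^0}\setminus X^0)$ is an elementary fibration. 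For the addendum, the inclusion of $Z$ in the list ensures $p|_{Z\cap X^0}$ is dominant, and finiteness/surjectivity follows because a generic fiber of $p$ is a curve meeting $Z$ properly in finitely many points.

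The principal technical obstacle, as in Artin's original argument, is the production of a \emph{smooth} projective completion $\overline p$ with geometrically irreducible fibers: the naive projective closure will typically be singular along $Y_\infty$, and one must further shrink $S$ (discarding the image of the singular locus and of non-geometrically-irreducible fibers) and, if necessary, resolve or normalize along the boundary before the conditions of Definition \ref{DefnElemFib} can be certified. Throughout this bookkeeping one must retain the $x_i$ in $X^0$ and, in the addendum, control the fiberwise intersection with $Z$; both are made possible precisely by the ``through prescribed points'' strengthening of Bertini provided by Theorem \ref{GeneralSection}.
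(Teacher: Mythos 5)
The paper itself records no proof of Proposition \ref{ArtinsNeighbor}; it only says the statement is a slight extension of Artin's \cite[Exp.XI, Prop.3.3]{LNM305}, obtained by running Artin's argument with Theorem \ref{GeneralSection} in place of the ordinary Bertini theorem. That is exactly the route you take, so strategically your proposal is the intended one. However, one step would fail as literally written: the dimension count in the construction of the projection. A linear system spanned by the $r-1$ forms $F_1,\dots,F_{r-1}$ defining $H_1,\dots,H_{r-1}$ gives a rational map to $\Pro^{r-2}$, not $\Pro^{r-1}$; its fibres on the $r$-dimensional $\overline X$ are surfaces, and its indeterminacy locus on $\overline X$ is the entire curve $Y\cap\overline X$ --- which cannot simultaneously be a fibre of the fibration. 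What you need is one further sufficiently general form $F_r$ of degree $d$ with $F_r(x_i)\neq 0$ for all $i$: the map $[F_1:\dots:F_r]:\overline X\dashrightarrow\Pro^{r-1}$ then has one-dimensional fibres, its base locus on $\overline X$ is the finite set $Y\cap\{F_r=0\}\cap\overline X$ (disjoint from the $x_i$ and removed by a single blow-up), and the good curve $Y\cap\overline X$ supplied by Theorem \ref{GeneralSection} becomes precisely the fibre through all the $x_i$, lying over the point $[0:\dots:0:1]$. Openness on the base of conditions (i)--(iii) of Definition \ref{DefnElemFib} then produces $S$.

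Two of the verifications you leave open also need to be pinned down, though by standard means. For the behaviour at infinity you should first replace $\overline X$ by its normalization (re-embedded projectively), so that $\overline X^{\mathrm{sing}}$ has codimension at least two and can be avoided both by the generic fibre and by the distinguished fibre through the $x_i$; appealing to resolution of singularities is neither available in positive characteristic nor necessary. For the addendum, the observation that a fibre of $p$ meets $Z$ in finitely many points only gives quasi-finiteness of $p|_{Z\cap X^0}$. To get finiteness you must also delete from $S$ the image of $\overline Z\cap Y_\infty$ (a closed set of dimension at most $r-2$, hence with proper closed image in $\Pro^{r-1}$), so that $Z\cap X^0$ becomes closed in the compactified family and therefore proper, hence finite, over $S$; and you must check that the points $s_i=p(x_i)$ survive this deletion, i.e.\ that the curve $Y\cap\overline X$ misses $\overline Z\cap Y_\infty$ --- which is again exactly what Theorem \ref{GeneralSection}, applied to that stratum, provides.
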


The proofs of the above Proposition and of the following one are provided in Appendix,~\ref{ArtinsNeighb}.

\begin{prop}
\label{CartesianDiagram} Let $p: X \to S$ be an almost elementary
fibration. If $S$ is a regular semi-local irreducible scheme, then there
exists a commutative diagram of $S$-schemes
\begin{equation}
\label{RactangelDiagram}
    \xymatrix{
X\ar[rr]^{j}\ar[d]_{\pi}&&\overline X\ar[d]^{\overline \pi}&&
Y\ar[ll]_{i}\ar[d]_{}&\\
\Aff^1\times S\ar[rr]^{\text{\rm in}}&&\Pro^1\times S&&
\ar[ll]_{i}\{\infty\}\times S &\\  }
\end{equation}
\noindent
such that the left hand side square is Cartesian. Here $j$ and
$i$ are the same as in Definition \ref{DefnElemFib}, while $\pr_S
\circ\pi=p$, where $\pr_S$ is the projection $\Aff^1\times S\to
S$.

In particular, $\pi:X\to\Aff^1\times S$ is a finite surjective
morphism of $S$-schemes, where $X$ and $\Aff^1\times S$ are
regarded as $S$-schemes via the morphism $p$ and the projection
$\pr_S$, respectively.
\end{prop}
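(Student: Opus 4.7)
The strategy is to construct a finite $S$-morphism $\overline{\pi}:\overline{X}\to\Pro^1\times S$ such that $\overline{\pi}^{-1}(\{\infty\}\times S)=Y$ as closed subsets of $\overline{X}$. Once this is in place, restriction to $X=\overline{X}\setminus Y$ yields $\pi:X\to\Aff^1\times S$, the left square of (\ref{RactangelDiagram}) is Cartesian by construction, and $\pi$ inherits finite surjectivity as the base change of $\overline{\pi}$ along the open immersion $\Aff^1\times S\hra\Pro^1\times S$. By the functor of points, such an $S$-morphism is given by an invertible sheaf $\mathcal L$ on $\overline{X}$ together with two global sections $s_0,s_\infty\in\Gamma(\overline{X},\mathcal L)$ with no common zero; the preimage of $\{\infty\}\times S$ is then the zero locus of $s_\infty$.

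I take $\mathcal L=\mathcal O_{\overline{X}}(nY)$. Each geometric fiber of $\overline p$ is a smooth projective irreducible curve on which the relative Cartier divisor $Y$ restricts to a nonempty effective divisor, hence of positive degree. Therefore $\mathcal O(Y)$ is relatively ample, and for $n$ sufficiently large $\mathcal O(nY)$ is relatively very ample and $R^1\overline p_*\mathcal O((n-1)Y)=0$ (by fiberwise Serre vanishing, uniformly since $S$ is semi-local). Take $s_\infty$ to be the canonical section of $\mathcal O(nY)$, whose divisor is $nY$. Applying $\overline p_*$ to the short exact sequence
\begin{equation*}
0\to\mathcal O_{\overline{X}}((n-1)Y)\to\mathcal O_{\overline{X}}(nY)\to \mathcal O_{\overline{X}}(nY)|_Y\to 0
\end{equation*}
and invoking the above vanishing yields a surjection of $\mathcal O_S$-modules
\begin{equation*}
\overline p_*\mathcal O_{\overline{X}}(nY)\twoheadrightarrow \overline p_*\bigl(\mathcal O_{\overline{X}}(nY)|_Y\bigr),
\end{equation*}
whose target is an invertible module over the finite $\mathcal O_S$-algebra $\overline p_*\mathcal O_Y$.

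The crucial step is to produce a nowhere-vanishing global section $\tau$ of $\mathcal O(nY)|_Y$ and then lift it to $s_0$. Since $S=\Spec R$ with $R$ semi-local and $Y\to S$ is finite, the ring $A:=\Gamma(Y,\mathcal O_Y)$ is itself semi-local, so $\Pic(A)=0$; thus the invertible $A$-module $\Gamma(Y,\mathcal O(nY)|_Y)$ is free of rank one, and its generator $\tau$ is a nowhere-vanishing section. Taking global sections on the affine scheme $S$ preserves surjectivity of quasi-coherent sheaves, so $\tau$ lifts to some $s_0\in\Gamma(\overline{X},\mathcal O(nY))$. By construction the zero loci of $s_0$ and $s_\infty$ are disjoint, and $(s_0:s_\infty)$ defines an $S$-morphism $\overline{\pi}:\overline{X}\to\Pro^1\times S$ with $\overline{\pi}^{-1}(\{\infty\}\times S)=Y$ set-theoretically.

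On each fiber over $S$, the map $\overline{\pi}$ is non-constant (since $(s_\infty)|_{\overline{X}_s}=nY_s\ne 0$), hence a surjective morphism with finite fibers between projective irreducible curves; being proper with finite fibers, $\overline{\pi}$ is finite. The main obstacle is the construction of $s_0$, which rests on the triviality of $\Pic A$---precisely where the semi-locality of $S$ enters essentially. The remaining assertions ($\pr_S\circ\pi=p$ and the finite surjectivity of $\pi$) are immediate from the construction and flat base change along $\Aff^1\times S\hra\Pro^1\times S$.
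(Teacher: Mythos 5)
Your proof is correct, and it is precisely the standard argument (the one in Artin's Exp.~XI for $S$ local, adapted to the semi-local case as in Ojanguren--Panin); the paper itself omits the proof, so there is nothing to diverge from. The key points are all present: $\mathcal O(nY)$ is relatively ample because $Y$ meets every fibre, the fibrewise $H^1$-vanishing for large $n$ gives the surjection onto sections along $Y$, and the semi-locality of $S$ enters exactly where you say it does, via $\Pic(\Gamma(Y,\mathcal O_Y))=0$ for the finite algebra $\Gamma(Y,\mathcal O_Y)$ over the semi-local ring $\Gamma(S,\mathcal O_S)$. The only blemishes are cosmetic: with the paper's convention $\infty=[0{:}1]$ the roles of $s_0$ and $s_\infty$ in the formula $(s_0{:}s_\infty)$ should be swapped so that the section cutting out $nY$ is the one whose vanishing is sent to $\infty$, and one should remark once that $Y$ is an effective Cartier divisor on $\overline X$ (clear, since $\overline X$ is regular and $Y$ has pure codimension one, or directly because $Y\to S$ is \'etale inside the smooth relative curve $\overline X\to S$).
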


\section{Nice triples}
\label{NiceTriples}

In the present section we introduce and study certain collections
of geometric data and their morphisms. The concept of a {\it nice
triple\/} is very similar to that of a {\it standard triple\/}
introduced by Voevodsky \cite[Defn.~4.1]{Vo}, and was in fact
inspired by the latter notion. Let $k$ be an infinite field, let $X/k$
be a smooth geometrically irreducible variety, and let
$x_1,x_2,\dots,x_n\in X$ be its closed points. Further, let
$\mathcal O=\mathcal O_{X,\{x_1,x_2,\dots,x_n\}}$ be the
corresponding geometric semi-local ring.
\begin{defn}
\label{DefnNiceTriple} Let $U:=\text{\Spec}(\mathcal
O_{X,\{x_1,x_2,\dots,x_n\}})$.
A \emph{nice triple} over $U$ consists of the following data:
\begin{itemize}
\item[\rm{(i)}] a smooth morphism $q_U:\mathcal X\to U$, where $\mathcal X$ is an irreducible scheme,
\item[\rm{(ii)}] an element $f\in\Gamma(\mathcal X,\mathcal
O_{\mathcal X})$,
\item[\rm{(iii)}] a section $\Delta$ of the morphism $q_U$,
\end{itemize}
subject to the following conditions:
\begin{itemize}
\item[\rm{(a)}]
each irreducible component of each fibre of the morphism $q_U$
has dimension one,
\item[\rm{(b)}]
the module
$\Gamma(\mathcal X,\mathcal O_{\mathcal X})/f\cdot\Gamma(\mathcal X,\mathcal O_{\mathcal X})$
is finite as
a $\Gamma(U,\mathcal O_{U})=\mathcal O$-module,
\item[\rm{(c)}] there exists a finite surjective $U$-morphism
$\Pi:\mathcal X\to\Aff^1\times U$, \item[\rm{(d)}]
$\Delta^*(f)\neq 0\in\Gamma(U,\mathcal O_{U})$.
\end{itemize}
\end{defn}
\begin{defn}
\label{DefnMorphismNiceTriple} A \emph{morphism} of two nice
triples
$(q^{\prime}: \mathcal X^{\prime} \to U,f^{\prime},\Delta^{\prime})\to(q: \mathcal X \to U,f,\Delta)$
is
an \'{e}tale morphism of $U$-schemes $\theta:\mathcal
X^{\prime}\to\mathcal X$ such that
\begin{itemize}
\item[\rm{(1)}] $q^{\prime}_U=q_U\circ\theta$, \item[\rm{(2)}]
$f^{\prime}=\theta^{*}(f)\cdot h^{\prime}$ for an element
$h^{\prime}\in\Gamma(\mathcal X^{\prime},\mathcal O_{\mathcal
X^{\prime}})$,
\item[\rm{(3)}] $\Delta=\theta\circ\Delta^{\prime}$.
\end{itemize}
\end{defn}
Two observations are in order here.
\par\smallskip
$\bullet$ Item (2) implies in particular that $\Gamma(\mathcal
X^{\prime},\mathcal O_{\mathcal
X^{\prime}})/\theta^*(f)\cdot\Gamma(\mathcal X^{\prime},\mathcal
O_{\mathcal X^{\prime}})$ is a finite
$\mathcal O$-module.
\par\smallskip
$\bullet$ It should be emphasized that no conditions are imposed
on the interrelation of $\Pi^{\prime}$ and $\Pi$.
\par\smallskip


Let $U$ be as in Definition
\ref{DefnNiceTriple}. Let $(\mathcal X,f,\Delta)$ be a nice triple
over $U$. Then for each finite surjective $U$-morphism
$\sigma:\mathcal X\to\Aff^1\times U$ and the corresponding
$\mathcal O$-algebra inclusion
$\mathcal O[t]\hra\Gamma(\mathcal X,\mathcal O_{\mathcal X})$
the algebra $\Gamma(\mathcal X,\mathcal O_{\mathcal X})$ is
finitely generated as an $\mathcal O[t]$-module. Since both rings
$\mathcal O[t]$ and $\Gamma(\mathcal X,\mathcal O_{\mathcal X})$
are regular, the algebra $\Gamma(\mathcal X,\mathcal O_{\mathcal
X})$ is finitely generated and projective as an $\mathcal
O[t]$-module by theorem \cite[Cor.~18.17]{E}.
Let
$T^r - a_{n-1}T^{r-1} + \dots \pm N(f)$
be the characteristic polynomial
of the
$\mathcal O[t]$-module endomorphism
$\Gamma(\mathcal X,\mathcal O_{\mathcal X}) \xra{f} \Gamma(\mathcal X,\mathcal O_{\mathcal X})$,
and set
\begin{equation}
g_{f,\sigma}:=f^{r-1}-a_{n-1}f^{r-2}+\dots\pm a_1\in
\Gamma(\mathcal X,\mathcal O_{\mathcal X}).
\end{equation}
\noindent
\begin{lem}
\label{FandG} $f\cdot g_{f,\sigma}=\pm N(f)\in\Gamma(\mathcal X,\mathcal O_{\mathcal X})$.
\end{lem}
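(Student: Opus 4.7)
The plan is to deduce the identity directly from the Cayley--Hamilton theorem. Let $A := \Gamma(\mathcal{X}, \mathcal{O}_{\mathcal{X}})$, which by the discussion preceding the lemma is a finitely generated projective $\mathcal{O}[t]$-module of some rank $r$. Multiplication by $f$ defines an $\mathcal{O}[t]$-linear endomorphism $m_f: A \to A$, and by definition its characteristic polynomial is
\[ \chi(\lambda) \;=\; \lambda^r - a_{r-1}\lambda^{r-1} + \dots \pm a_1 \lambda \mp N(f). \]

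Next I would invoke Cayley--Hamilton in the form appropriate for finitely generated projective modules: $\chi(m_f) = 0$ as an endomorphism of $A$. This is standard and reduces to the classical matrix version either by localizing at the finitely many primes of a trivializing cover, or by realizing $A$ as a direct summand of a free module $\mathcal{O}[t]^N$ and extending $m_f$ by zero on a complement (the characteristic polynomial of the extension differs from $\chi$ by a factor $\lambda^{N-r}$, which does not affect the conclusion after evaluating on elements of $A$). Applying the zero endomorphism $\chi(m_f)$ to the unit $1 \in A$ yields the identity
\[ f^r - a_{r-1} f^{r-1} + \dots \pm a_1 f \mp N(f) \;=\; 0 \]
in the commutative ring $A$.

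Finally, transposing $\mp N(f)$ to the right-hand side and factoring $f$ out of every remaining term gives
\[ f \cdot \bigl(f^{r-1} - a_{r-1} f^{r-2} + \dots \pm a_1\bigr) \;=\; \pm N(f), \]
and the factor in parentheses is by definition $g_{f,\sigma}$. No real obstacle arises here; the only point that requires mild care is the standard passage from Cayley--Hamilton for matrices to Cayley--Hamilton for endomorphisms of finitely generated projective modules, which is entirely routine.
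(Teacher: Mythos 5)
Your proposal is correct and is exactly the paper's argument: the authors also simply observe that the characteristic polynomial of the multiplication-by-$f$ endomorphism of the finite projective $\mathcal O[t]$-module $\Gamma(\mathcal X,\mathcal O_{\mathcal X})$ vanishes on $f$ (Cayley--Hamilton), and then factor $f$ out of the non-constant terms to obtain $f\cdot g_{f,\sigma}=\pm N(f)$. Your extra care about passing from the matrix case to projective modules is a reasonable elaboration of a step the paper leaves implicit, but it is the same proof.
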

\begin{proof}
Indeed, the characteristic polynomial of the operator
$\Gamma(\mathcal X,\mathcal O_{\mathcal X}) \xra{f}
\Gamma(\mathcal X,\mathcal O_{\mathcal X})$ vanishes on $f$.
\end{proof}
Let us state two crucial results which will be used in our main
construction. Their proofs are given in Sections
\ref{SectElemNisnevichSquare}
and
\ref{SecEquatingGroups}
respectively.

\begin{thm}
\label{ElementaryNisSquare} Let $U$ be as in Definition
\ref{DefnNiceTriple}. Let $(\mathcal X,f,\Delta)$ be a nice triple
over $U$, such that $f$ vanishes at every closed point of $\Delta(U)$. There exists a distinguished finite surjective morphism
$$ \sigma:\mathcal X\to\Aff^1\times U $$
\noindent
of $U$-schemes which enjoys the following properties.
\begin{itemize}
\item[\rm{(1)}] $\sigma$ is \'{e}tale along the closed subset
$\{f=0\}\cup\Delta(U)$. 
\item[\rm{(2)}] For $g_{f,\sigma}$ and $N(f)$ defined by the
distinguished $\sigma$, one has
$$ \sigma^{-1}\Big(\sigma\big(\{f=0\}\big)\Big)=
\{N(f)=0\}=\{f=0\}\sqcup\{g_{f,\sigma}=0\}. $$
\item[\rm{(3)}] Denote by $\mathcal X^0\hra\mathcal X$ the largest
open subscheme where the morphism $\sigma$ is \'{e}tale. Write
$g$ for $g_{f,\sigma}$ in this item. Then the square
\begin{equation}
\label{SquareDiagram2}
    \xymatrix{
     \mathcal X^{0}_{N(f)}=\mathcal X^{0}_{fg}  \ar[rr]^{\inc} \ar[d]_{\sigma^0_{fg}} &&
     \mathcal X^{0}_g \ar[d]^{\sigma^0_g}  &\\
     (\Aff^1 \times U)_{N(f)} \ar[rr]^{\inc} && \Aff^1 \times U &\\
    }
\end{equation}
is an elementary Nisnevich square. More precisely, this square is
Cartesian and the morphism of the reduced closed subschemes
$$ \sigma^0_g|_{\{f=0\}_{\red}}:\{f=0\}_{\red}\to\{N(f)=0\}_{\red} $$
\noindent
of the schemes $\mathcal X^{0}_g$ and $\Aff^1\times U$ is an
isomorphism.
\item[\rm{(4)}] One has $\Delta(U)\subset\mathcal X^{0}_g$.
\end{itemize}
\end{thm}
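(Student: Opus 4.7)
The strategy is to construct $\sigma$ by modifying the finite surjective $U$-morphism $\Pi_0 \colon \mathcal X \to \Aff^1 \times U$ supplied by condition (c) of Definition \ref{DefnNiceTriple}. Writing $\Pi_0 = (h_0, q_U)$ with $h_0 \in \Gamma(\mathcal X, \mathcal O_{\mathcal X})$, the candidate will have the form $\sigma = (h, q_U)$ for a perturbation $h \equiv h_0 \pmod{I^N}$, where $I$ is the defining ideal of a suitably enlarged finite closed subscheme of $\mathcal X$ containing $Z := \{f=0\} \cup \Delta(U)$. Note that $Z$ is finite over $U$ (finiteness of $\{f=0\}$ comes from condition (b) of the nice triple, while $\Delta(U) \cong U$ is tautologically finite over $U$), and hence semi-local.

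The core of the argument is to choose $h$ so as to satisfy simultaneously, at each closed point $z$ of $Z$ and its relevant companion points in the fibres of $q_U$, the following conditions: (a) the image of $h - h(z)$ in the relative cotangent space of $q_U$ at $z$ is nonzero, so that $\sigma$ is \'etale at $z$; (b) the values $h(z_1), \dots, h(z_s)$ at distinct closed points of $\{f=0\}$ lying in a common fibre of $q_U$ are pairwise distinct, so that $\sigma|_{\{f=0\}}$ is injective on closed points; (c) $h(\Delta(u))$ differs from every $h(z_i)$ for $z_i\in\{f=0\}$ sitting in the same fibre as $\Delta(u)$, so that $\sigma$ separates $\Delta(U)$ from $\{f=0\}$. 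Each of these is a non-vanishing condition on finitely many residues of $h$, open and non-empty in the coset $h_0 + I^N$; the infinitude of $k$ together with a Chinese remainder / general-position argument (parallel to the Bertini-style Theorem \ref{GeneralSection}) supplies an $h$ satisfying all of them at once. Since $h \equiv h_0$ modulo an ideal with $\mathcal O$-finite quotient, $\sigma$ is again finite surjective; item (1) is exactly (a).

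Items (2), (3), (4) are then formal. Lemma \ref{FandG} gives $f \cdot g_{f,\sigma} = \pm N(f)$, whence the set-theoretic identity $\sigma^{-1}(\sigma(\{f=0\})) = \{N(f) = 0\}$; \'etaleness of $\sigma$ along $\{f=0\}$ forces this locus to decompose scheme-theoretically as $\{f=0\} \sqcup \{g_{f,\sigma}=0\}$ inside $\mathcal X^0$, yielding (2). For (3), Cartesianness of the square (\ref{SquareDiagram2}) is the base-change identity $\mathcal X^0 \cap \sigma^{-1}((\Aff^1\times U)_{N(f)}) = \mathcal X^0_{fg}$, while the isomorphism on reduced $\{f=0\}$-loci is exactly (b) combined with \'etaleness. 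For (4), condition (c) yields $\Delta(U) \cap \{fg=0\} = \emptyset$, so $\Delta^*(g_{f,\sigma})$ is a unit in $\mathcal O$, and $\Delta(U) \subset \mathcal X^0$ holds by (1).

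The main obstacle will be the simultaneous enforcement of \'etaleness at $Z$ and the fibrewise injectivity on $\{f=0\}$: perturbations that repair one of (a), (b), (c) routinely break another, so the proof must organise the perturbation globally via a single coset $h_0 + I^N$ deep enough that (a)--(c) translate into independent non-vanishing conditions on residues of $h$, at which point the infinitude of $k$ forces the general-position choice to exist.
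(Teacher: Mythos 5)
Your derivation of items (2)--(4) from properties of $\sigma$ is in the right spirit, and so is the idea of imposing finitely many conditions at the closed points of $Z=\{f=0\}\cup\Delta(U)$ and invoking the infinitude of $k$. But the construction of $\sigma$ itself --- the heart of the theorem --- fails as you set it up. If $h\equiv h_0\pmod{I^N}$ with $Z\subset V(I)$, then at every closed point $z$ of $Z$ the value $h(z)=h_0(z)$ is frozen for any $N\ge 1$, and for $N\ge 2$ so is the class of $h-h(z)$ in $\mathfrak m_z/\mathfrak m_z^2$, hence in the relative cotangent space of $q_U$ at $z$. Consequently your conditions (a), (b), (c) are not ``open and non-empty in the coset $h_0+I^N$'': they are \emph{constant} on the coset, holding for all its elements or for none according to whether the original $\Pi$ already had the desired properties, which there is no reason to expect. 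Deepening the congruence removes freedom rather than creating it. Moreover, a condition you omit but which is forced by the isomorphism $\{f=0\}_{\red}\to\{N(f)=0\}_{\red}$ in item (3) is that the residue field extension $k(z)/k(\sigma(z))$ be \emph{trivial} at each closed point $z$ of $\{f=0\}$ (\'etale plus injective on closed points is not enough: $\Spec(\mathbb C)\to\Spec(\mathbb R)$); since $k(\sigma(z))$ is generated over $k(q_U(z))$ by the frozen value $h_0(z)$, this too cannot be arranged within your coset. Finally, the claim that $h\equiv h_0$ modulo an ideal with $\mathcal O$-finite quotient keeps $\sigma$ finite is false: take $R=k[x]_{(x)}$, $A=R[t]$, $h_0=t$ and $h=t+xt^2\equiv t\pmod{(t)^2}$; then $A$ is not finite over $R[h]$, because the generic fibre of $\Spec(A)\to\Spec(R[h])$ has degree $2$ while the fibre over the point $x=h=0$ is a single reduced $k$-point, contradicting upper semicontinuity of fibre dimension for a finite module.

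The paper's proof organises the construction in the opposite way. On each closed fibre $\bar A_i=A/\mathfrak p_iA$ --- a smooth affine curve over $l_i=R/\mathfrak p_i$ --- one chooses the coordinate $\bar s_i$ \emph{freely}, as a generic linear projection (Lemma \ref{Lemma3}); genericity buys \'etaleness at the marked points, separation of their images, and the crucial equalities $k[\bar s_i]/\mathfrak n_j=\bar A_i/\mathfrak m_j$ of residue fields. One then lifts to $s\in A$ and corrects to $u=s-rv^N$ with $v=tN(f)$, where $N$ exceeds the degrees in an integral dependence relation of $s$ over $R[v]$ and $r\in k^{*}$ is generic: the exponent $N$ makes $v$, hence $A$, integral over $R[u]$ (this is where finiteness is actually earned), while $\bar v$ lies in all the marked maximal ideals, so the correction, vanishing there to order $N\ge 2$, does not disturb what was achieved fibrewise. (Note also that item (4) is not obtained by separating $\Delta(U)$ from $\{f=0\}$ as in your (c): by Claim \ref{DeltaIsWellDefined} the closed points of $\Delta(U)$ \emph{lie on} $\{f=0\}$, and $\Delta(U)\subset\mathcal X^0_g$ follows from $\{f=0\}\subset\mathcal X^0_g$.) To salvage your scheme you would have to decouple the two roles: prescribe $\sigma$ freely on an infinitesimal neighbourhood of the marked points of the closed fibres, and enforce finiteness by a separate integrality device rather than by a congruence with $\Pi$.
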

\begin{rem}
One readily sees that if in Theorem~\ref{ElementaryNisSquare} we let $\mathcal X^0$ be any open subscheme of $\mathcal X$
such that $\sigma$ is \'etale on $\mathcal X^0$ and $\mathcal X^0$ contains the closed subset $\{f=0\}\cup\Delta(U)$,
then all the claims of this theorem are still valid. In particular, if needed,
one can assume that $\mathcal X^0$ is an affine scheme.
\end{rem}

\begin{thm}
\label{ThEquatingGroups} Let $U$ be as in Definition
\ref{DefnNiceTriple}. Let $(\mathcal X,f,\Delta)$ be a nice triple
over $U$. Let $G_{\mathcal X}$ be a simple simply connected
$\mathcal X$-group scheme, and let $G_U:=\Delta^*(G_{\mathcal
X})$. Finally, let $G_{\const}$ be the pull-back of $G_U$ to
$\mathcal X$. Then there exists a morphism $\theta:(\mathcal
X^{\prime},f^{\prime},\Delta^{\prime})\to(\mathcal X,f,\Delta)$ of
nice triples and an isomorphism
$$ \Phi: \theta^*(G_{\const}) \to \theta^*(G_{\mathcal X}) $$
\noindent
of $\mathcal X^{\prime}$-group schemes such that
$(\Delta^{\prime})^*(\Phi)=\id_{G_U}$.
\end{thm}

\section{Proof of Theorem~\ref{ElementaryNisSquare}}
\label{SectElemNisnevichSquare}
The nearest aim is to
prove Theorem~\ref{ElementaryNisSquare}. We will use
analogues of three lemmas from~\cite{Pa1} making them
characteristic free. Lemma \ref{Lemma2} is a refinement of
\cite[Lemma 5.2]{OP2}.

\begin{lem}
\label{Lemma3} Let $k$ be an infinite field
and let $S$ be an
$k$-smooth equidimensional $k$-algebra of dimension one.
Let $f\in
S$ be a non-zero divisor.
\par
Let $\mathfrak m_0$ be a maximal ideal with $S/\mathfrak m_0=k$.
Let $\mathfrak m_1,\mathfrak m_2,\dots,\mathfrak m_n$ be pairwise
distinct maximal ideals of $S$ {\rm(}possibly $\mathfrak
m_0=\mathfrak m_i$ for some $i${\rm)}. Then there exists a
non-zero divisor $\bar s\in S$ such that $S$ is finite over
$k[\bar s]$ and
\begin{itemize}
\item[{\rm(1)}] the ideals $\mathfrak n_i:=\mathfrak m_i\cap
k[\bar s]$, $1\le i\le n$, are pairwise distinct. If $\mathfrak
m_0$ is distinct from all $\mathfrak m_i$'s, then $\mathfrak n_0:=\mathfrak m_0\cap k[\bar
s]$ is distinct from all $\mathfrak n_i$'s; 
\item[{\rm(2)}] the extension $S/k[\bar s]$ is \'etale at each
$\mathfrak m_i$, $i=1,2,\ldots,n$, and at $\mathfrak m_0$; 
\item[{\rm(3)}] $k[\bar s]/\mathfrak n_i=S/\mathfrak m_i$ for each
$i=1,2,\ldots,n$; 
\item[{\rm(4)}] $\mathfrak n_0=\bar s k[\bar s]$.
\end{itemize}
\end{lem}
\begin{proof}
Let $x_i$, $0\le i\le n$, be the points on $\Spec(S)$ corresponding
to the ideals $\mathfrak m_i$. Consider a closed embedding
$\Spec(S)\hra\Aff^n_k$ and find a generic linear projection
$p:\Aff^n_k\to\Aff^1_k$, defined over $k$ and such that  the following holds:
\begin{itemize}
\item[{\rm(1)}] for all $i,j\geq 0$ one has $p(x_i)\neq p(x_j)$,
provided that $x_i\neq x_j$; 
\item[{\rm(2)}] for each index $i\geq 0$ the map $p|_{\Spec(S)}:
\Spec(S)\to\Aff^1_k$ is \'{e}tale at the point $x_i$; 
\item[{\rm(3)}] for each $i$, the separable degree of the extension
$k(x_i)/k(p(x_i))$ is one. 
\end{itemize}
These items imply equalities $k(p(x_i))=k(x_i)$, for all $i$.
Indeed, the extension $k(x_i)/k(p(x_i))$ is separable by (2). By
(3) we conclude that $k(p(x_i))=k(x_i)$. Lemma follows.

\end{proof}

\begin{lem}
\label{Lemma4} Under the hypotheses of Lemma \ref{Lemma3} let
$f\in S$ be a non-zero divisor which does not belong to a maximal
ideal distinct from $\mathfrak m_0,\mathfrak m_1,\dots,\mathfrak
m_n$. Let $\bar s\in S$ be an element satisfying {\rm(1)} to {\rm(4)} of Lemma~\ref{Lemma3}.
Let $N(f)=N_{S/k[\bar s]}(f)$ be the norm of $f$. Then one has
\begin{itemize}
\item[{\rm(a)}] $N(f)=fg$ for an element $g\in S$; 
\item[{\rm(b)}] $fS+gS=S$; 
\item[{\rm(c)}] the map $k[\bar s]/(N(f))\to S/(f)$ is an
isomorphism.
\end{itemize}
\end{lem}
\begin{proof}
Straightforward.

\end{proof}

\begin{lem}
\label{Lemma2}
Let $k$ be an infinite field, and let $R$ be a domain which is a semi-local essentially smooth
$k$-algebra with maximal ideals $\mathfrak p_i$, $1\le i\le m$.
Let $A \supseteq R[t]$ be another domain, smooth as an $R$-algebra and
finite over $R[t]$.
Assume that for each $i$ the $R/\mathfrak
p_i$-algebra $A_i=A/\mathfrak p_i A$ is equidimensional of
dimension one.
Let $\epsilon:A\to R$ be an $R$-augmentation and
$I=\Ker(\epsilon)$. Given an $f\in A$ with
$$ 0\neq\epsilon(f)\in\bigcap\limits^m_{i=1}\mathfrak p_i \subset R$$
\noindent
and such that the $R$-module $A/fA$ is finite, one can find an
element $u\in A$ satisfying the following conditions:
\begin{itemize}
\item[{\rm(1)}] $A$ is a finite projective module over $R[u]$; 
\item[{\rm(2)}] $A/uA=A/I\times A/J$ for some ideal $J$; 
\item[{\rm(3)}] $J+fA=A$; 
\item[{\rm(4)}] $(u-1)A+fA=A$; 
\item[{\rm(5)}] set $N(f)=N_{A/R[u]}(f)$, then $N(f)=fg\in A$ for
some $g\in A$; 
\item[{\rm(6)}] $fA+gA=A$; 
\item[{\rm(7)}] the composition map $\varphi:R[u]/(N_{A/R[u]}(f))\to A/(N_{A/R[u]}(f))\to A/(f)$ is an isomorphism.
\end{itemize}
\end{lem}

\begin{proof}
Replacing $t$ by $t-\epsilon(t)$ we may assume that
$\epsilon(t)=0$. Since $A$ is finite over $R[t]$, it follows from a theorem of
Grothendieck~\cite[Cor. 17.18]{E} that it is a finite
projective $R[t]$-module.

Since $A$ is finite over $R[t]$ and $A/fA$ is finite over $R$ we
conclude that $R[t]/(N_{A/R[t]}(f))$ is finite over $R$, and hence
$R/(tN_{A/R[t]}(f))$ is finite over $R$.

Setting $v=tN_{A/R[t]}(f)$, we get an
integral extension $R[t]$ over $R[v]$.
Thus $A$ is finite over $R[v]$.
By the theorem of
Grothendieck~\cite[Cor. 17.18]{E} $A$ is a finite
projective $R[v]$-module.

Applying Lemma \ref{FandG} to $A$ over $R[t]$ (not over $R[v]$) one gets an equality
$N_{A/R[t]}(f)=f\cdot g_{f,t} \in A$
for an element $g_{f,t} \in A$.
Thus
$$ v=t\cdot N_{A/R[t]}(f)=t\cdot f\cdot g_{f,t} \in fA, \ \ \text{and} \ \ \epsilon (v)=\epsilon(t)\cdot \epsilon(N_{A/R[t]}(f))=0$$
\par
Below, we use the bar $\bar{\ }$ to denote reduction modulo an ideal, and the
subscript $i$ to indicate that reduction is modulo $\mathfrak p_iA$, $1\le i\le m$. Let
$l_i=\bar R_i=R/\mathfrak p_i$. By the assumption of the lemma, the
$l_i$-algebra $\bar A_i$ is $l_i$-smooth
equidimensional
of dimension $1$.
The element $\bar f_i \in \bar A_i$ is a non-zero divisor since
$\bar A_i/\bar f_i \bar A_i=\overline {(A/fA)}_i$ is a finite $l_i$-module.
Let
${\mathfrak m}^{(i)}_1,{\mathfrak m}^{(i)}_2,\dots,{\mathfrak m}^{(i)}_{n_i}$
be distinct maximal ideals
of $\bar A_i$ dividing $\bar f_i$ and let $\mathfrak
m^{(i)}_0=\Ker(\bar{\epsilon}_i)$. Let $\bar s_i\in\bar A_i$ be
such that the extension $\bar{A}_i/l_i[\bar s_i]$ satisfies
conditions (1) to (4) of Lemma~\ref{Lemma3}.
\par
Let $s\in A$ be a common lifting of $\bar s_i$'s, in other words,
$\overline s=\bar s_i$ in $\bar A_i$ for all $i=1,\ldots,m$. Replacing $s$
by $s-\epsilon(s)$ we may assume that $\epsilon(s)=0$ and, as
above, $\overline s=\bar s_i$ for all $i=1,\ldots,m$.
\par
Let $s^n+p_1(v)s^{n-1}+\dots+p_n(v)=0$ be an integral dependence relation
for $s$. Let $N$ be an integer larger than
$\max\{2,\deg(p_j(t))\}$, where $j=1,2,\dots,n$. Then for any
$r\in k^{\times}$ the element $u=s-rv^N$ has the following property:
$v$ is integral over $R[u]$. Thus, for any $r\in k^{\times}$ the ring
$A$ is integral over $R[u]$.
\par
On the other hand, one has
$\bar v_i\in\mathfrak m^{(i)}_j$
for all
$1\le i\le m$ and all $0\le j\le n_i$,
since $v\in fA$ and $\epsilon(v)=0$.
It implies that each element
$\bar u_i=\bar s_i-r\bar{v_i}^N$ still satisfies conditions (1) to (4)
of Lemma~\ref{Lemma3}.
\par
We claim that the element $u\in R$  has all the properties listed in the statement of the present lemma,
for almost all $r\in k^{\times}$.
\par
Indeed, for almost all $r\in k^{\times}$ the element $u$ satisfies
Conditions (1) to (4) of Lemma \ref{Lemma2}.
It remains to
show that Conditions (5) to (7) hold for all $r\in k^{\times}$.
\par
Since $A$ is finite over $R[u]$, the same theorem of Grothendieck~\cite[Cor.~17.18]{E} implies that it is a finite projective
$R[u]$-module. To prove (5), consider the characteristic
polynomial of the operator $A\xra{f} A$ as an $R[u]$-module
operator. This polynomial vanishes on $f$ and its free term equals
$\pm N_{A/R[u]}(f)$, the norm of $f$. Thus,
$f^n-a_1f^{n-1}+\dots\pm N_{A/R[u]}(f)=0$ and $N_{A/R[u]}(f)=f\cdot g_{f,u}$
for some $g_{f,u}\in A$.
\par
To prove (6), one has to verify that the above $g$ is a unit
modulo the ideal $fA$. It suffices to check that for each index
$i$ the element $\bar g_i\in\bar A_i$ is a unit modulo the ideal
$\bar f_i\bar A_i$. To that end observe that the field
$l_i=R/\mathfrak p_i$, the $l_i$-algebra $S_i=\bar A_i$, its
maximal ideals $\mathfrak m^{(i)}_0,\mathfrak
m^{(i)}_1,\dots,\mathfrak m^{(i)}_{n_i}$ and the element $\bar u_i$
satisfy the hypotheses of Lemma \ref{Lemma4}, with $u$ replaced by
$\bar u_i$. Now, by Item (b) of Lemma \ref{Lemma4} the reduction
$\bar g_i$ is a unit modulo the ideal $\bar f_i\bar R_i$.
\par
To prove (7), observe that $R[u]/(N_{A/R[u]}(f))$ and $A/fA$ are
finite $R$-modules. Thus, it remains to check that the map
$\varphi:R[u]/(N_{A/R[u]}(f))\to A/fA$ is an isomorphism modulo
each maximal ideal $\mathfrak p_i$. To that end it
suffices to verify that the map $\bar\varphi_i:l_i[\bar
u_i]/(N(\bar f_i))\to\bar A_i/\bar f_i\bar A_i$ is an isomorphism
for each index $i$, where $N(\bar f_i):=N_{\bar A_i/l_i[\bar
u]}(\bar f_i)$. Now, by Item (c) of Lemma \ref{Lemma4} the map
$\bar\varphi_i$ is an isomorphism. This finishes the proof.

\end{proof}





\begin{proof}[Proof of Theorem \ref{ElementaryNisSquare}]
Let $U=\text{Spec}(\mathcal O_{X,\{x_1,x_2,\dots,x_r\}})$ be as in
Definition \ref{DefnNiceTriple}. Write $R$ for $\mathcal
O_{X,\{x_1,x_2,\dots,x_r\}}$.
It is a domain which is a semi-local
essentially smooth $k$-algebra with maximal ideals $\mathfrak
p_i$, $1\le i\le r$. Let $(\mathcal X,f,\Delta)$ be a nice triple
over $U$. We show that it gives rise to certain data subject to
the hypotheses of Lemma \ref{Lemma2}.
\par
Let $A=\Gamma(\mathcal X,\mathcal O_{\mathcal X})$. It is a domain, since $\mathcal X$ is irreducible.
It is an
$R$-algebra via the ring homomorphism $q^*_U:R\to\Gamma(\mathcal
X,\mathcal O_{\mathcal X})$. Furthermore, it is smooth as an
$R$-algebra. The triple $(\mathcal X,f,\Delta)$ is a nice triple.
Thus, there exists a finite surjective $U$-morphism $\Pi:\mathcal
X\to\Aff^1_U$. It induces an $R$-algebra inclusion
$R[t]\hra\Gamma(\mathcal X,\mathcal O_{\mathcal X})=A$ such that
$A$ is finitely generated as an $R[t]$-module. Also, for all $i=1,\ldots,r$, the $R/\mathfrak p_i$-algebra
$A/\mathfrak p_i A$ is equidimensional of dimension one. Let
$$
\epsilon=\Delta^*:A\to R
$$ be an $R$-algebra homomorphism
induced by the section $\Delta$ of the morphism $q_U$. Clearly,
this $\epsilon$ is an augmentation; set $I=\Ker(\epsilon)$.
Further, since $(\mathcal X,f,\Delta)$ is a nice triple,
$\epsilon(f)\neq 0\in R$ and $A/fA$ is finite as an $R$-module.
Finally, $f$ vanishes at every closed point of $\Delta(U)$ by the assumption of the Theorem.
Summarising the above, we conclude that we are in
the setting of Lemma \ref{Lemma2}, and may use the conclusion of
that Lemma.
\par
Thus, there exists an element $u\in A$ subject to Conditions (1)
through (7) of Lemma \ref{Lemma2}. This $u$ induces an $R$-algebra
inclusion $R[u]\hra A$ such that $A$ is finite as an
$R[u]$-module. Let
$$ \sigma:\mathcal X\to\Aff^1\times U $$
\noindent
be the $U$-scheme morphism induced by the above inclusion $R[u]\hra
A$. Clearly, $\sigma$ is finite and surjective.
In the rest of the proof we write $t$ instead of $u$, and consider $A$ as an $R[t]$-module via $\sigma$.
Let
$N(f):=N_{A/R[t]}(f) \in R[t]\subseteq A$ and $g_{f,\sigma}\in A$ be the elements defined
just above Lemma~\ref{FandG}.
\par
We claim that this morphism $\sigma$ and the chosen
elements $N(f)$ and $g_{f,\sigma}$ satisfy conclusions (1) to (4) of Theorem
\ref{ElementaryNisSquare}. Let us verify this claim. Since $A$ is
finite as an $R[t]$-module and both rings $R[t]$ and $A$ are
regular, the $R[t]$-module $A$ is finitely generated and
projective, see \cite[Corollary 18.17]{E}. Thus, $\sigma$ is
\'{e}tale at a point $x\in \mathcal X$ if and only if the
$k(\sigma(x))$-algebra $k(\sigma(x))\otimes_{R[t]} A$ is
\'{e}tale. If the point $x$ belongs to the closed subscheme
$\text{Spec}(A/\mathfrak p_i A)$ for some maximal ideal $\mathfrak p_i$ of $R$, then
$$ k(\sigma(x))\otimes_{R[t]}A=k(\sigma(x))\otimes_{(R/\mathfrak
p_i)[t]}A/\mathfrak p_iA. $$
\noindent
We can conclude that $\sigma$ is \'{e}tale at a specific point $x$
if and only if the $(R/\mathfrak p_i)[t]$-algebra $A/\mathfrak
p_iA$ is \'etale at the point $x$. It follows from the proof of
Lemma \ref{Lemma2} that the morphism $\sigma$ induces a morphism
$\text{Spec}(A/\mathfrak p_i A)\xra{\sigma_i}\Aff^1_{l_i}$ on the
closed fibre $\text{Spec}(A/\mathfrak p_i A)$ for each $i$. This
induced morphism is \'{e}tale along the vanishing locus of the
function $\bar f_i$
and along each point $\overline
\Delta_i(\Spec\,l_i)$.
Indeed, for the vanishing locus of the function
$\bar f_i$
this follows from Items (6) and (7) of Lemma~\ref{Lemma2}. It follows from the hypotheses of
Lemma~\ref{Lemma2}
that the function $f$ vanishes at each maximal ideal containing $I$.
Thus $\sigma$ is \'{e}tale along the closed subscheme
$\mathcal X$ defined by the ideal $I$, that is along $\Delta(U)$.
This settles Item (1) of Theorem~\ref{ElementaryNisSquare}.
\par
Consider Item (2). Write $g$ for $g_{f,\sigma}$. The first of the following equalities
$$ \sigma^{-1}(\sigma(\{f=0\}))=\{N(f)=0\}=\{f=0\}\sqcup\{g=0\} $$
\noindent
is a commonplace. 
The second one follows from the equality $N(f)=\pm f \cdot g$, proved in Lemma~\ref{FandG}
and Item (6) of Lemma~\ref{Lemma2}.
\par
Clearly, the square
(\ref{SquareDiagram2}) is Cartesian and the morphism $\sigma^0_g$
is \'{e}tale. The scheme $\mathcal X^{0}_g$ contains a closed
subscheme $\Delta(U)$, and hence is non-empty. Item (7) of Lemma
\ref{Lemma2} shows that the morphism of the reduced closed
subschemes
$$ \sigma^0_g|_{\{f=0\}_{\red}}: \{f=0\}_{\red} \to \{N(f)=0\}_{\red} $$
\noindent
is an isomorphism. Thus, we have checked Item (3) of Theorem
\ref{ElementaryNisSquare}.
\par
It remains only to check Item (4). We already know that
$\{f=0\}\subset\mathcal X^{0}_g$. Both schemes $\Delta(U)$ and $\{f=0\}$
are semi-local and the set of closed points of $\Delta(U)$ is
contained in the set of closed points of the closed set $\{f=0\}$ by the assuptions of the theorem.
Thus, $\Delta(U)\subset\mathcal X^{0}_g$. This concludes the proof
of Item (4) of Theorem \ref{ElementaryNisSquare} and thus of the
theorem itself.

\end{proof}

\section{Proof of Theorem \ref{ThEquatingGroups}}
\label{SecEquatingGroups}
The aim of this Section is to
proof of Theorem \ref{ThEquatingGroups}.
We begin with the
following Proposition which
is a straightforward analogue of \cite[Prop. 7.1]{OP1}
\def\tilde{\widetilde}
\begin{prop}
\label{PropEquatingGroups} Let $S$ be a regular semi-local
irreducible scheme and let $G_1,G_2$ be two semi-simple
simply-connected $S$-group schemes which are twisted forms of each other. Further, let $T\subset S$ be a
closed sub-scheme of $S$ and $\varphi:G_1|_T\to G_2|_T$ be an
$S$-group scheme isomorphism. Then there exists a finite \'{e}tale
morphism $\tilde S\xra{\pi}S$ together with its section
$\delta:T\to\tilde S$ over $T$ and an $\tilde S$-group scheme
isomorphism $\Phi:\pi^*{G_1}\to\pi^*{G_2}$ such that
$\delta^*(\Phi)=\varphi$.
\end{prop}

Since the proof of the Proposition~\ref{PropEquatingGroups} is rather long we first give an outline.
Clearly, $G_1$ and $G_2$ are of the same type.
By~\cite[Exp. XXIV, Cor. 1.8]{SGA3} there exists an $S$-scheme $Isom_S(G_1,G_2)$ representing
the functor that sends an $S$-scheme $W$ to the set of
all $W$-group scheme isomorphisms from $W \times_S G_1$ to $W \times_S G_2$.
The isomorphism $\varphi$
from the hypothesis of Proposition
\ref{PropEquatingGroups}
determines a section
$\delta: T \to Isom_S(G_1,G_2)$
of the structure map
$Isom_S(G_1,G_2) \to S$.
By Lemmas \ref{tildeS}
and \ref{IsomG_1G_2} below
there exists a closed subscheme $\tilde S$ of
$Isom_S(G_1,G_2)$
which is finite \'{e}tale over $S$ and contains $\delta(T)$.
So, we have a commutative diagram of $S$-schemes
\begin{equation}
\label{D}
    \xymatrix{
    T \ar[rrd]_-{i} \ar[rr]^-{\delta}&& \tilde S \ar[rr]^-{} \ar[d]^-{\pi}  && Isom_S(G_1,G_2)  \ar[lld]^-{} \\
    && S  &&.  \\
    }
\end{equation}
such that the horizontal arrows are closed embeddings.
Thus we get an isomorphism
$\Phi: \pi^*(G_1) \to \pi^*(G_2)$
such that
$\delta^{*}(\Phi)=\varphi$.

The precise proof of the Proposition requires some auxiliary results and
will be given right below Lemma \ref{tildeS}.
Clearly, $G_1$ and $G_2$ are of the same type. Let $G_{0}$ be a split
semi-simple
simply connected algebraic group over the ground field $k$ such that
$G_1$ and $G_2$ are twisted forms of the $S$-group scheme
$S \times_{Spec(k)} G_0$.
Let
$Aut_k(G_0)$
be the automorphism scheme of the algebraic $k$-group $G_0$.
It is known that
$Aut_k(G_0)$
is a semi-direct product of the algebraic $k$-group
$G^{\ad}_0$ and a finite group,
where
$G^{\ad}_0$ is a group adjoint to $G_0$.
Also, $Aut_k(G_0)$ is a smooth affine algebraic $k$-group (for example, by~\cite[Exp. XXIV, Cor. 1.8]{SGA3}).
Set for short
$Aut:=Aut_k(G_0)$
and
$Aut_S$ for the $S$-group scheme
$S \times_{Spec(k)} Aut$.

Consider an $S$-scheme $Isom_S(G_{0,S},G_2)$ constructed in
\cite[Exp.  XXIV, Cor.1.8]{SGA3}
and representing a functor that sends an $S$-scheme $W$ to the set of
all $W$-group scheme isomorphisms
$\varphi_2:W \times_S G_{0,S}\to W \times_S G_2$.
Similarly, consider
an $S$-scheme $Aut_S(G_2)$ constructed in
\cite[Exp. XXIV, Cor. 1.8]{SGA3}
and representing a functor that sends an $S$-scheme $W$ to the set of
all $W$-group scheme automorphisms
$\alpha: W \times_S G_2 \to W \times_S G_2$.

The functor transformation
$(\varphi_2, \alpha_2) \mapsto \varphi_2 \circ \alpha^{-1}_2$
defines an $S$-scheme morphism
$$Isom_S(G_{0,S},G_2) \times_S Aut_S \to Isom_S(G_{0,S},G_2)$$
which makes the $S$-scheme
$Isom_S(G_{0,S},G_2)$
a principal right $Aut_S$-bundle.
The functor transformation
$(\beta_2, \varphi_2) \mapsto \beta_2 \circ \varphi_2$
defines an $S$-scheme morphism
$$Aut_S(G_2) \times_S Isom_S(G_{0,S},G_2) \to Isom_S(G_{0,S},G_2)$$
which makes the $S$-scheme
$Isom_S(G_{0,S},G_2)$
a principal left $Aut_S(G_2)$-bundle.

Analogously,
the functor transformation
$(\alpha_1, \varphi_1) \mapsto \alpha_1 \circ \varphi_1$
makes the $S$-scheme
$Isom_S(G_1,G_{0,S})$
a principal left $Aut_S$-bundle and
the functor transformation
$( \varphi_1, \beta_1) \mapsto \varphi_1 \circ \beta_1$
makes the $S$-scheme
$Isom_S(G_1,G_{0,S})$
a principal right $Aut_S(G_1)$-bundle.

Let $_{2}P_r$ be a left principal $Aut_S(G_2)$-bundle and at the same time a right principal $Aut_S$-bundle
such that the two actions commute.
Let $_{l}P_1$ be a left principal $Aut_S$-bundle and at the same time a right principal $Aut_S(G_1)$-bundle
such that the two actions commute.
Let $Y$ be a $k$-variety equipped with a left and a right $Aut_k$-actions which commute. Then the $k$-scheme
$$({_{2}P_r}) \times_S (Y_S) \times_S ({_{l}P_1 })$$
is equipped with a left $Aut_k \times Aut_k$-action given by
$$
(\alpha_2, \alpha_1)(p_2,y,p_1)=(p_2 \alpha^{-1}_2, \alpha_2 y \alpha^{-1}_1, \alpha_1 p_1).
$$
The orbit space does exist (it can be constructed by descent). Denote it by $_{2}Y_1$.
We now show that it is an $S$-scheme. Indeed, the structure morphism
$Y \to Spec(k)$
defines a morphism
$$({_{2}P_r}) \times_S (Y_S) \times_S ({_{l}P_1 }) \to ({_{2}P_r}) \times_S ({_{l}P_1 })$$
respecting the $Aut \times Aut$-actions on both sides. Thus it defines a morphism of the orbit spaces
$$_{2}Y_1 \to (_{2}Spec(k)_1)=S.$$
The latter equality holds since
$({_{2}P_r}) \times_S ({_{l}P_1 })$
is a principal left $Aut \times Aut$-bundle with respect the left action given by
$(\alpha_2, \alpha_1)(p_2,p_1)=(p_2 \alpha^{-1}_2, \alpha_1 p_1)$.

The construction
$Y \mapsto \ _{2}Y_1$
has several nice properties. Namely,
\begin{itemize}
\item[(i)]
it is natural with respect to $k$-morphisms of $k$-varieties
$Y \to Y^{\prime}$
commuting with the given two-sided $Aut \times Aut $-actions on $Y$ and $Y^{\prime}$,
\item[(ii)]
it takes closed embeddings to closed embeddings,
\item[(iii)]
it takes open embeddings to open embeddings,
\item[(iv)]
it takes $k$-products to $S$-products,
\item[(v)]
locally in the \'{e}tale topology on $S$, the $S$-schemes
$Y_S$ and $_{2}Y_1$ are isomorphic.
\end{itemize}
Set
${_{2}P_r}= Isom_S(G_{0,S},G_2)$
and
${_{l}P_1}= Isom_S(G_1,G_{0,S})$.
The functor transformation
$(\varphi_2, \alpha, \varphi_1) \mapsto \varphi_2 \circ \alpha \circ \varphi_1$
gives a morphism of representable $S$-functors
$$Isom_S(G_{0,S},G_2) \times_S (Aut_S) \times_S Isom_S(G_1,G_{0,S}) \xra{\displaystyle \Phi} Isom_S(G_1,G_2).$$
The equality
$$\varphi_2 \circ \alpha \circ \varphi_1=
(\varphi_2 \circ \alpha^{-1}_2)\circ (\alpha_2 \circ \alpha \circ \alpha^{-1}_1) \circ (\alpha_1 \circ \varphi_1)$$
shows that the morphism $\Phi$ induces a morphism
$\bar \Phi: \ _{2}(Aut)_1 \to Isom_S(G_1,G_2)$.

\begin{lem}
\label{IsomG_1G_2}
The $S$-morphism
$$\bar \Phi: \  _{2}(Aut)_1 \to Isom_S(G_1,G_2)$$
is an isomorphism.
\end{lem}

\begin{proof}
It suffices to prove that $\bar \Phi$ is an isomorphism locally in the \'{e}tale topology on $S$.
The latter follows from the property (v).
\end{proof}

Now let $G_0$ and $Aut$ be as above. There is a closed embedding of algebraic groups
$\rho: Aut \hra GL_{V,k}$
for an $n$-dimensional $k$-vector space $V$.
Replacing
$\rho$ with $\rho \oplus det^{-1} \circ \rho$ we get a closed embedding of algebraic $k$-groups
$\rho_1: Aut \hra SL_{W,k}$,
where $W=V \oplus k$.
Let $End:=End_k(W)$.
Clearly, the composition
$in: Aut \xra{\rho_1} SL_{W,k} \hra End$
is a closed embedding.
We will identify
$Aut$ with its image in $End$.
Let $\overline {Aut}$ be the closure of $Aut$ in the projective space $\Pro(k \oplus End )$.
Set
$Aut_{\infty}:=\overline {Aut}- Aut$
regarded as a reduced scheme. So, we get a commutative diagram of $k$-varieties
\begin{equation}
\label{RactangelDiagram}
    \xymatrix{
Aut \ar[rr]^{j}\ar[d]^{in}&& \overline {Aut} \ar[d]^{\overline {in}}&& Aut_{\infty} \ar[ll]_{i}\ar[d]^{in_{\infty}}&\\
End \ar[rr]^{\text{\rm J}}&& \Pro(k \oplus End ) && \ar[ll]_{I} \Pro(End)  &\\  }
\end{equation}
where the left square is Cartesian. All varieties are equipped with the left
$Aut \times Aut$-action induced by
$Aut \times Aut$-action on the affine space $k \oplus End$
given by
$(g_1,g_2)(\alpha,c)= (c, g_1 \alpha g^{-1}_2)$.
All the arrows in this
diagram respect this action. Applying to this diagram the above construction
$Y \mapsto \ _{2}Y_1$,
we obtain a commutative diagram of $S$-schemes
\begin{equation}
\label{RactangelDiagram-S}
    \xymatrix{
_{2}Aut_1 \ar[rr]^{j}\ar[d]^{in}&& _{2}(\overline {Aut})_1 \ar[d]^{\overline {in}}&& _{2}(Aut_{\infty})_1 \ar[ll]_{i}\ar[d]^{in_{\infty}}&\\
_{2}End_1  \ar[rr]^{\text{\rm J}}&& \Pro(\mathcal O_S \oplus \ _{2}End_1 ) && \ar[ll]_{I} \Pro(_{2}End_1)  &\\  }
\end{equation}
where the square on the left is Cartesian.

From now on we assume that $S$ is a semi-local irreducible scheme. Then the vector bundle
$_{2}End_1$ is trivial.
Since it is trivial, we may choose homogeneous coordinates $Y_i$'s on
$\Pro(\mathcal O_S \oplus \ _{2}End_1)$
such that the closed subschemes
$\{Y_0=0 \}$
and
$\Pro(_{2}End_1)$
of the scheme
$\Pro(\mathcal O_S \oplus \ _{2}End_1)$
coincide and the $S$-scheme
$\Pro(\mathcal O_S \oplus \ _{2}End_1)$
itself is isomorphic to the projective space
$\Pro^{n^2}_S$. Thus the diagram
(\ref{RactangelDiagram-S})
of $S$-schemes and of $S$-scheme morphisms can be rewritten as follows
\begin{equation}
\label{Compactification}
    \xymatrix{
_{2}Aut_1 \ar[rr]^{j}\ar[d]^{in}&& _{2}(\overline {Aut})_1 \ar[d]^{\overline {in}}&& _{2}(Aut_{\infty})_1 \ar[ll]_{i}\ar[d]^{in_{\infty}}&\\
\{Y_0 \neq 0 \}   \ar[rr]^{\text{\rm J}}&& \Pro^{n^2}_S && \ar[ll]_{I} \{Y_0 = 0 \}  &\\  }
\end{equation}
where the square on the left is Cartesian.
Since
$_{2}(Aut_{\infty})_1= \ _{2}(\overline {Aut})_1 - \ _{2}Aut_1$,
the set-theoretic intersection
$_{2}(\overline {Aut})_1 \cap \{Y_0 = 0 \}$
in $\Pro^{n^2}_S$ coincides with
$_{2}(Aut_{\infty})_1$.

The following Lemma is the lemma~\cite[Lemma 7.2]{OP1}.
\begin{lem}
\label{Lemma7_2}
Let $S=Spec(R)$ be a regular semi-local scheme and $T$ a closed
subscheme of $S$. Let $\bar X$ be a closed subscheme of
$\Pro^{N}_S= Proj(S[Y_0,\dots,Y_{N}])$ and
$X=\bar X\cap\Aff^{N}_S$,
where
$\Aff^{N}_S$
is the affine space defined by
$Y_0\neq0$. Let
$X_{\infty}=\bar X\setminus X$ be the intersection of $\bar X$ with the
hyperplane at infinity
$Y_0=0$. Assume further that
\begin{itemize}
\item[(1)]$X$ is smooth and equidimensional over $S$,
of relative dimension $r$.
\item[(2)]
For every closed
point $s\in S$ the closed fibres of $X_\infty$ and $X$
satisfy
$$\dim (X_\infty(s))< \dim (X(s))=r\;.$$
\item[(3)]
Over $T$ there exists a section $\delta:T\to X$
of the canonical projection $X\to S$.
\end{itemize}
Then there exists a closed subscheme $\tilde S$ of $X$ which is finite
\'etale
  over
$S$ and contains $\delta(T)$.
\end{lem}
The diagram
(\ref{Compactification})
shows that the $S$-schemes
$X= \ _{2}Aut_1$,
$\bar X = \ _{2}(\overline {Aut})_1$
and
$X_{\infty}= \ _{2}(Aut_{\infty})_1$
satisfy all the hypotheses of Lemma
\ref{Lemma7_2}
except possibly the conditions (2) and (3). To check (2), observe that the diagram of $S$-schemes
\begin{equation}
\label{RactangelDiagram}
    \xymatrix{
_{2}Aut_1  \ar[rr]^{\text{\rm j}}&& _{2}(\overline {Aut})_1  && \ar[ll]_{i} \ _{2}(Aut_{\infty})_1  &\\  }
\end{equation}
locally in the \'{e}tale topology on $S$ is isomorphic to the diagram of $S$-schemes
\begin{equation}
\label{RactangelDiagram}
    \xymatrix{
Aut \times S \ar[rr]^{\text{\rm j}}&& (\overline {Aut}) \times S  && \ar[ll]_{i} (Aut_{\infty}) \times S.  &\\  }
\end{equation}
This follows from the property (v) of the construction $Z \to {}_{2}Z_1$.
Since $Aut$ is equidimensional and $\overline {Aut}$ is the closure of $Aut$ in $\Pro(End \oplus k)$, one has
$$\dim (Aut_{\infty}) < \dim (\overline {Aut})= \dim Aut.$$
Thus the assumption (2) of Lemma~\ref{Lemma7_2}
is fulfilled. Whence we have proved the following

\begin{lem}
\label{tildeS}
Assume $S$ is a regular semi-local irreducible scheme and assume we are given with
a closed subscheme $T \subset S$ equipped with a section
$\delta: T \to \ _{2}Aut_1 $
of the structure map
$_{2}Aut_1 \to S$. Then there exists a closed subscheme
$\tilde S$ of $_{2}Aut_1$
which is finite and \'{e}tale over $S$ and contains $\delta(T)$.
\end{lem}

\begin{proof}[Proof of Proposition \ref{PropEquatingGroups}]
By Lemma
\ref{IsomG_1G_2}
the $S$-schemes
$Isom_S(G_1,G_2)$
and
$_{2}Aut_1$
are naturally isomorphic as $S$-schemes.
The isomorphism
$\varphi$
from the hypotheses of the Proposition
\ref{PropEquatingGroups}
determines a section
$\delta: T \to Isom_S(G_1,G_2)= \ _{2}Aut_1$
of the structure map
$Isom_S(G_1,G_2)= \ _{2}Aut_1 \to S$.
By Lemma
\ref{tildeS}
there exists a closed subscheme $\tilde S$ of
$_{2}Aut_1= Isom_S(G_1,G_2)$
which is finite \'{e}tale over $S$ and contains $\delta(T)$.
So, we have morphisms (even closed inclusions) of $S$-schemes
\begin{equation}
\label{D}
    \xymatrix{
    T \ar[rrd]_-{i} \ar[rr]^-{\delta}&& \tilde S \ar[rr]^-{} \ar[d]^-{\pi}  && Isom_S(G_1,G_2)  \ar[lld]^-{} \\
    && S  &&.  \\
    }
\end{equation}
Thus we get an isomorphism
$\Phi: \pi^*(G_1) \to \pi^*(G_2)$
such that
$\delta^{*}(\Phi)=\varphi$.

\end{proof}

{\bf Proof of Theorem \ref{ThEquatingGroups}.} We can start by
almost literally repeating arguments from the proof of \cite[Lemma
8.1]{OP1}, which involve the following purely geometric lemma
\cite[Lemma 8.2]{OP1}.
\par
For reader's convenience below we state that Lemma adapting
notation to the ones of Section \ref{NiceTriples}.
Namely, let $U$ be as in Definition \ref{DefnNiceTriple} and let
$(\mathcal X,f,\Delta)$ be a nice triple over $U$. Further, let
$G_{\mathcal X}$ be a simple simply-connected $\mathcal X$-group
scheme, $G_U:=\Delta^*(G_{\mathcal X})$, and let $G_{\const}$ be
the pull-back of $G_U$ to $\mathcal X$. Finally, by the definition
of a nice triple there exists a finite surjective morphism
$\Pi:\mathcal X\to\Aff^1\times U$ of $U$-schemes.
\begin{lem}
\label{Lemma_8_2} Let $\mathcal Y$ be a closed nonempty sub-scheme
of $\mathcal X$, finite over $U$. Let $\mathcal V$ be an open
subset of $\mathcal X$ containing $\Pi^{-1}(\Pi(\mathcal Y))$. There
exists an open set $\mathcal W \subseteq \mathcal V$ still
containing $q_U^{-1}(q_U(\mathcal Y))$ and endowed with a finite
surjective morphism
$\Pi^*: \mathcal W\to\Aff^1\times U$ {\rm(}in general
$\neq\Pi${\rm)}.
\end{lem}
Let $\Pi:\mathcal X\to\Aff^1\times U$ be the above finite
surjective $U$-morphism.
The following diagram summarises the situation:
$$ \xymatrix{
{}&\mathcal Z \ar[d]&{}\\
{\mathcal X - \mathcal Z\ }\ar@{^{(}->}@<-2pt>[r]&\mathcal X\ar@<2pt>[d]^{q_U}\ar[r]^>>>>{\Pi}& \Aff^1 \times U\\
{}& U\ar@<2pt>[u]^{\Delta}&{} } $$
\noindent
Here $\mathcal Z$ is the closed sub-scheme defined by the equation
$f=0$. By assumption, $\mathcal Z$ is finite over $U$. Let
$\mathcal Y=\Pi^{-1}(\Pi(\mathcal Z \cup \Delta(U)))$. Since
$\mathcal Z$ and $\Delta(U)$ are both finite over $U$ and since
$\Pi$ is a finite morphism of $U$-schemes, $\mathcal Y$ is also
finite over $U$. Denote by $y_1,\dots,y_m$ its closed points and
let $S=\text{Spec}(\mathcal O_{\mathcal X,y_1,\dots,y_m})$. Set
$T=\Delta(U)\subseteq S$. Further, let $G_U=\Delta^*(G_{\mathcal
X})$ be as in the hypotheses of Theorem
\ref{ThEquatingGroups} and
let
$G_{\const}$
be the pull-back of
$G_U$ to $\mathcal X$. Finally,
let
$\varphi:G_{\const}|_T \to G_{\mathcal X}|_T$
be the canonical
isomorphism. Recall that by assumption $\mathcal X$ is $U$-smooth,
and thus $S$ is regular.
\par
By Proposition
\ref{PropEquatingGroups} there exists a finite
\'etale covering $\theta_0:\tilde S\to S$, a section
$\delta:T\to\tilde S$ of $\theta_0$ over $T$ and an isomorphism
$$ \Phi_0:\theta^*_0(G_{\const,S})\to\theta^*_0(G_{\mathcal X}|_S) $$
\noindent
such that $\delta^*\Phi_0=\varphi$.
{\bf Replacing $\tilde S$ with a connected component of $\tilde S$ which contains
$\delta(T)=\delta(\Delta(U))$
we may and will assume that $\tilde S$ is irreducible.}
We can extend these data to a
neighborhood $\mathcal V$ of $\{y_1,\dots,y_n\}$ and get the
diagram
\begin{equation}
\xymatrix{
     {}  &  \tilde S \ar[d]^{\theta_0} \ar@{^{(}->}@<-2pt>[r]  & \tilde {\mathcal V}  \ar[d]_{\theta} &\\
     T \ar@{^{(}->}@<-2pt>[r] \ar[ur]^{
\delta} & S \ar@{^{(}->}@<-2pt>[r]  &   \mathcal V \ar@{^{(}->}@<-2pt>[r]  &  \mathcal X &\\
    }
\end{equation}
\noindent
where $\pi:\tilde{\mathcal V}\to\mathcal V$ finite \'etale, and an
isomorphism $\Phi:\theta^*(G_{\const})\to\theta^*(G_{\mathcal
X})$.
\par
Since $T$ isomorphically projects onto $U$, it is still closed
viewed as a sub-scheme of $\mathcal V$. Note that since $\mathcal
Y$ is semi-local and $\mathcal V$ contains all of its closed
points, $\mathcal V$ contains $\Pi^{-1}(\Pi(\mathcal Y))=\mathcal
Y$. By Lemma \ref{Lemma_8_2} there exists an open subset $\mathcal
W\subseteq\mathcal V$ containing $\mathcal Y$ and endowed with a
finite surjective $U$-morphism $\Pi^*:\mathcal W\to\Aff^1\times
U$.
\par
Let $\mathcal X^{\prime}=\theta^{-1}(\mathcal W)$,
$f^{\prime}=\theta^{*}(f)$, $q^{\prime}_U=q_U\circ\theta$, and let
$\Delta^{\prime}:U\to\mathcal X^{\prime}$ be the section of
$q^{\prime}_U$ obtained as the composition of $\delta$ with
$\Delta$. We claim that the triple $(\mathcal
X^{\prime},f^{\prime},\Delta^{\prime})$ is a nice triple. Let us
verify this. Firstly, the structure morphism
$q^{\prime}_U:\mathcal X^{\prime} \to U$ coincides with the
composition
$$
\mathcal X^{\prime}\xra{\theta}
\mathcal W\hra\mathcal X\xra{q_U} U.
$$
\noindent
Thus, it is smooth. The element $f^{\prime}$ belongs to the ring
$\Gamma(\mathcal X^{\prime},\mathcal O_{\mathcal X^{\prime}})$,
the morphism $\Delta^{\prime}$ is a section of $q^{\prime}_U$.
Each component of each fibre of the morphism $q_U$ has dimension
one, the morphism $\mathcal X^{\prime}\xra{\theta}\mathcal
W\hra\mathcal X$ is \'{e}tale. Thus, each component of each fibre
of the morphism $q^{\prime}_U$ is also of dimension one. Since
$\{f=0\} \subset {\mathcal W}$ and $\theta: \mathcal X^{\prime}
\to {\mathcal W}$ is finite, $\{f^{\prime}=0\}$ is finite over
$\{f=0\}$ and hence  also over $U$. In other words, the $\mathcal
O$-module $\Gamma(\mathcal X^{\prime},\mathcal O_{\mathcal
X^{\prime}})/f^{\prime}\cdot\Gamma(\mathcal X^{\prime},\mathcal
O_{\mathcal X^{\prime}})$ is finite. The morphism $\theta:
\mathcal X^{\prime}\to\mathcal W$ is finite and surjective.
We have constructed above in
Lemma \ref{Lemma_8_2}
the finite surjective morphism
$\Pi^*:\mathcal W\to\Aff^1\times U$. It follows that
$\Pi^*\circ\theta:\mathcal X^{\prime}\to\Aff^1\times U$ is finite
and surjective.
\par
Clearly, the \'{e}tale morphism $\theta:\mathcal X^{\prime}
\to\mathcal X$ is a morphism of nice triples, with $g=1$.
\par
Denote the restriction of $\Phi$ to $\mathcal X^{\prime}$ simply
by $\Phi$. The equality $(\Delta^{\prime})^*{\Phi}=\id_{G_U}$
holds by the very construction of the isomorphism $\Phi$. Theorem
follows.

\section{A basic nice triple}
\label{BasicTriple}

With Propositions \ref{ArtinsNeighbor} and \ref{CartesianDiagram}
at our disposal we may form {\it a basic nice triple}, namely the
triple (\ref{FirstNiceTriple}) below. This is the main aim of the
present section.

Namely, fix a smooth geometrically irreducible affine $k$-scheme $X$,
a finite family of points $x_1,x_2, \dots , x_n$ on $X$, and a
non-zero function $\ttf\in k[X]$.
We {\it always assume} that the set $\{x_1,x_2,\dots,x_n\}$ is
contained in the vanishing locus of the function $\ttf$.
\par
By Proposition \ref{ArtinsNeighbor} there exist a Zariski open
neighborhood $X^0$ of the family $\{x_1,x_2,\dots,x_n\}$ and an
almost elementary fibration $p:X^0\to S$, where $S$ is an open subscheme
of the projective space $\Pro^{\mydim X-1}$, such that
$$ p|_{\{\ttf=0\}\cap X^0}:\{\ttf=0\}\cap X^0\to S $$
\noindent
is finite surjective. Let $s_i=p(x_i)\in S$, for each $1\le i\le
n$. Shrinking $S$, we may assume that $S$ is {\it affine \/} and
still contains the family $\{s_1,s_2,\dots,s_n\}$. Clearly, in
this case $p^{-1}(S)\subseteq X^0$ contains the family
$\{x_1,x_2,\dots,x_n\}$. We replace $X$ by $p^{-1}(S)$ and $\ttf$
by its restriction to this new $X$.
\par
In this way we get an almost elementary fibration $p:X\to S$ such that
$$ \{x_1,\dots,x_n\}\subset\{\ttf=0\}\subset X, $$
\noindent
$S$ is an open affine subscheme in the projective space
$\Pro^{\mydim X-1}$, and the restriction
$p|_{\{\ttf=0\}}:\{\ttf=0\}\to S$ of $p$ to the vanishing locus of $\ttf$
is a finite surjective morphism. In other words, $k[X]/(\ttf)$ is
finite as a $k[S]$-module.
\par
As an open affine subscheme of the projective space $\Pro^{\mydim
X-1}$ the scheme $S$ is regular. By Proposition~\ref{CartesianDiagram}
one can shrink $S$ in such a way that $S$
is still affine, contains the family $\{s_1,s_2,\dots,s_n\}$ and
there exists a finite surjective morphism
$$ \pi:X\to\Aff^1\times S $$
\noindent
such that $p=\pr_S\circ\pi$. Clearly, in this case
$p^{-1}(S)\subseteq X$ contains the family
$\{x_1,x_2,\dots,x_n\}$. We replace $X$ by $p^{-1}(S)$ and $\ttf$
by its restriction to this new $X$.
\par
In this way we get an almost elementary fibration $p:X\to S$ such that
$$ \{x_1,\dots,x_n\}\subset\{\ttf=0\}\subset X, $$
\noindent
$S$ is an open affine subscheme in the projective space
$\Pro^{\mydim X-1}$, and the restriction
$p|_{\{\ttf=0\}}:\{\ttf=0\}\to S$
is a finite surjective morphism. Eventually we conclude that there
exists a finite surjective morphism $\pi:X\to\Aff^1\times S $ such that $p=\pr_S\circ\pi$.
\par
Now, set $U:=\text{Spec}(\mathcal O_{X,\{x_1,x_2,\dots,x_n\}})$,
denote by $\can:U\hra X$ the canonical inclusion of schemes, and
let $p_U=p\circ\can:U\to S$. Further, we consider the fibre
product
$$
\mathcal X:=U\times_{S}X.
$$
 Then the canonical projections
$q_U:\mathcal X\to U$ and $q_X:\mathcal X\to X$ and the diagonal
morphism $\Delta:U\to\mathcal X$ can be included in the following
diagram
\begin{equation}
\label{SquareDiagram}
    \xymatrix{
     \mathcal X\ar[d]_{q_U}\ar[rr]^{q_X} &&  X   & \\
     U \ar[urr]_{\can} \ar@/_0.8pc/[u]_{\Delta} &\\
    }
\end{equation}
where
\begin{equation}
\label{DeltaQx} q_X\circ\Delta=\can
\end{equation}
and
\begin{equation}
\label{DeltaQu} q_U\circ\Delta=\id_U.
\end{equation}
Note that $q_U$ is {\it a smooth morphism with geometrically
irreducible fibres of dimension one}. Indeed, observe that $q_U$
is a base change via $p_U$ of the morphism $p$ which has the
desired properties.
Note that $\mathcal X$ is irreducible. Indeed,
$U$ is irreducible and the fibre of $q_U$ over the generic point of $U$ is irreducible.

Taking the base change via $p_U$ of the finite
surjective morphism $\pi:X\to\Aff^1\times S$, we get {\it a finite
surjective morphism
$$ \Pi:\mathcal X\to\Aff^1\times U $$
\noindent
such that\/} $q_U=\pr_U\circ\Pi$, where $pr_U:\Aff^1\times U\to U$ is the natural projection.

Set $f:=q_X^*(\ttf)$. The $\mathcal O_{X,\{x_1,x_2,\dots,x_n\}}$-module
$\Gamma(\mathcal X,\mathcal O_{\mathcal X})/f\cdot\Gamma(\mathcal X,\mathcal O_{\mathcal X})$
is finite, since the $k[S]$-module $k[X]/\ttf\cdot k[X]$ is finite.

Now the data
\begin{equation}
\label{FirstNiceTriple} (q_U:\mathcal X\to U,f,\Delta)
\end{equation}
form an example of a {\it nice triple\/} as in
Definition
\ref{DefnNiceTriple}. Moreover, we have

\begin{clm}
\label{DeltaIsWellDefined} The schemes $\Delta(U)$ and $\{f=0\}$
are both semi-local and the set of closed points of $\Delta(U)$ is
contained in the set of closed points of $\{f=0\}$.
\end{clm}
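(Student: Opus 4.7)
The plan is to unwind what "semi-local" means for each scheme and then verify that each closed point of $\Delta(U)$ sits inside $\{f=0\}$ as a closed point.

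First I would handle $\Delta(U)$. The map $q_U:\mathcal{X}\to U$ is smooth (hence separated), so its section $\Delta$ is a closed immersion, and $\Delta$ identifies $U$ with $\Delta(U)$ as schemes. Since $U=\Spec(\mathcal{O}_{X,\{x_1,\dots,x_n\}})$ is semi-local with closed points $x_1,\dots,x_n$, the scheme $\Delta(U)$ is semi-local with closed points $\Delta(x_1),\dots,\Delta(x_n)$.

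Next I would handle $\{f=0\}$. Condition (b) in Definition~\ref{DefnNiceTriple} says that $\Gamma(\mathcal{X},\mathcal{O}_\mathcal{X})/f\cdot\Gamma(\mathcal{X},\mathcal{O}_\mathcal{X})$ is a finite $\mathcal{O}$-module, which exactly means that the restriction $q_U|_{\{f=0\}}\colon\{f=0\}\to U$ is a finite morphism. Since $U$ is semi-local and finite morphisms onto semi-local schemes have semi-local source, $\{f=0\}$ is semi-local; moreover, a point of $\{f=0\}$ is closed if and only if its image in $U$ is one of the $x_i$'s.

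Finally I would establish the inclusion of closed points. Using $q_X\circ\Delta=\can$ (equation~(\ref{DeltaQx})) and the definition $f=q_X^*(\ttf)$, one computes
\[
\Delta^*(f)=\Delta^*q_X^*(\ttf)=\can^*(\ttf)\in\mathcal{O}.
\]
By the standing assumption of Section~\ref{BasicTriple} the function $\ttf$ vanishes at each $x_i$, hence $\can^*(\ttf)$ lies in every maximal ideal $\mathfrak{m}_{x_i}$ of $\mathcal{O}$. Therefore $f$ vanishes at each point $\Delta(x_i)$, which means $\Delta(x_i)\in\{f=0\}$. Because $q_U(\Delta(x_i))=x_i$ is a closed point of $U$ (by (\ref{DeltaQu})) and closed points of $\{f=0\}$ are detected by the finite morphism $q_U|_{\{f=0\}}$, the point $\Delta(x_i)$ is in fact a closed point of $\{f=0\}$, proving the inclusion of closed-point sets.

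There is essentially no serious obstacle: the claim is a direct consequence of the nice-triple axioms together with the defining inclusion $\{x_1,\dots,x_n\}\subset\{\ttf=0\}$. The only subtlety to keep in mind is that "closed point" refers to the intrinsic topology of each subscheme, which is why one needs the finiteness of $q_U|_{\{f=0\}}$ over the semi-local base $U$ to identify the closed points of $\{f=0\}$ correctly, rather than appealing to the Zariski topology on $\mathcal{X}$ (in which $\Delta(x_i)$ is not in general a closed point).
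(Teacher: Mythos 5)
Your proof is correct and is exactly the paper's argument spelled out in full: the paper disposes of the claim in one line by pointing to the standing assumption $\{x_1,\dots,x_n\}\subset\{\ttf=0\}$, and your three steps (semi-locality of $\Delta(U)$ via the closed immersion $\Delta$, semi-locality of $\{f=0\}$ via finiteness of $A/fA$ over $\mathcal O$, and $\Delta^*(f)=\can^*(\ttf)\in\bigcap_i\mathfrak m_{x_i}$) are precisely the details being suppressed. The only slip is the closing parenthetical: since $\Delta$ is a closed immersion, $\Delta(x_i)$ \emph{is} closed in $\mathcal X$ (closed points of a closed subscheme are closed in the ambient scheme), but this side remark plays no role in your argument.
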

This holds since the set $\{x_1,x_2,\dots,x_n\}$ is contained in
the vanishing locus of the function $\ttf$.


\section{Main construction}
\label{MainConstruction}
The main result of this Section is Corollary
\ref{Ptandht}.

Fix a $k$-smooth irreducible affine $k$-scheme $X$, a finite family of points
$x_1,x_2,\dots,x_n$ on $X$, and set $\mathcal O:=\mathcal
O_{X,\{x_1,x_2,\dots,x_n\}}$ and $U:= \text{Spec}(\mathcal O)$.
Let $A$ be the Noetherian $k$-algebra from Theorem \ref{MainThmGeometric} and $T=\text{Spec}(A)$.
Further, consider a simple simply connected $U$-group scheme $G$
and a principal $G$-bundle $P$ over $\mathcal O \otimes_k A$ which is trivial
over $K \otimes_k A$ for the field of fractions $K$ of $\mathcal O$.
We may and will
assume that for certain $\text{f} \in \mathcal O$ the principal
$G$-bundle $P$ is trivial over $\mathcal O_{\text{f}} \otimes_k A$.

Shrinking
$X$ if necessary, we may secure the following properties
\par\smallskip
(i) The points $x_1,x_2,\dots,x_n$ are still in $X$ and $X$ is affine.
\par\smallskip
(ii) The group scheme $G$ is defined over $X$ and it is a simple
group scheme. We will often denote this $X$-group scheme by $G_X$
and write $G_U$ for the original $G$.
\par\smallskip
(iii)
The principal $G_U$-bundle $P$ is the restriction to $U \times_{\Spec(k)} T$ of a principal $G_X$-bundle $P_X$ over $X\times_{\Spec(k)} T$
and
$\ttf\in k[X]$.
We often will write $P_U$ for the original principal $G_U$-bundle $P$ over $U \times_{\Spec(k)} T$.
\par\smallskip
(iv) The restriction $P_{\text{f}}$ of the bundle $P_X$ to the
principal open subset $X_{\text{f}} \times_{\Spec(k)} T$ is trivial and $\text{f}$
vanishes at each $x_i$'s.

After substituting $k$ by its algebraic closure $\tilde k$ in $k[X]$, and $T$ by
$\tilde T=\Spec(\tilde k)\times_{\Spec(k)} T$,
we can assume that $X$ is a $\tilde k$-smooth geometrically irreducible affine $\tilde k$-scheme.  Note that
$U\times_{\Spec(\tilde k)}\tilde T\cong U\times_{\Spec(k)}T$ as $U$-schemes, and the same holds for $X$ instead of $U$.
To simplify the notation, we will
continue to denote this new $\tilde k$ by $k$ and $\tilde T$ by $T$.

\smallskip
In particular, we are given now the smooth geometrically irreducible affine
$k$-scheme $X$, the finite family of points $x_1,x_2,\dots,x_n$ on
$X$, and the non-zero function $\ttf\in k[X]$ vanishing at each
point $x_i$. Recall that starting from these data we
constructed at the very end
of Section \ref{BasicTriple} the nice triple
(\ref{FirstNiceTriple}) of the form
$(q_U:\mathcal X\to U,f,\Delta)$
with $\mathcal X=U \times_S X$.
We did that shrinking $X$ and securing properties (i) to (iv) at the same time.

Recall that $q_X: \mathcal X= U \times_S X \to X$
is the projection to $X$.
Set
$$
G_{\mathcal X}:=(q_X)^*(G_X)\quad\mbox{and}\quad G_{const}:=(q_U)^*(G_U).
$$

By Theorem \ref{ThEquatingGroups}
there exists a morphism of nice triples
$$ \theta: (q^{\prime}_U: \mathcal X^{\prime} \to U,f^{\prime},\Delta^{\prime})\to (q_U: \mathcal X \to U,f,\Delta) $$
\noindent
and an isomorphism
\begin{equation}
\label{KeyEquation} \Phi: \theta^*(G_{\const}) \to
\theta^*(G_{\mathcal X})=: G_{\mathcal X^{\prime}}
\end{equation}
of $\mathcal X^{\prime}$-group schemes such that
$(\Delta^{\prime})^*(\Phi)=\id_{G_U}$.
\par
Set
\begin{equation}
\label{QprimeX} q^{\prime}_X=q_X\circ\theta:\mathcal X^{\prime}\to X.
\end{equation}
Recall that
\begin{equation}
\label{QprimeU} q^{\prime}_U=q_U\circ\theta:\mathcal X^{\prime}\to
U,
\end{equation}
since
$\theta$
is a morphism of nice triples.\\

Note that, since by Claim~\ref{DeltaIsWellDefined} $f$ vanishes on all closed points of $\Delta(U)$,
and $\theta$ is a morphism of nice triples, $f^{\prime}$ vanishes on all closed points of $\Delta^{\prime}(U)$ as well.
Therefore, the nice triple
$(q^{\prime}_U: \mathcal X^{\prime} \to U, f^{\prime}, \Delta^{\prime}: U \to \mathcal X^{\prime})$
is subject to Theorem~\ref{ElementaryNisSquare}.

By Theorem \ref{ElementaryNisSquare} there exists a finite
surjective morphism $\sigma:\mathcal X^{\prime}\to\Aff^1\times U$
of $U$-schemes satisfying (1) to (3) from that Theorem. In
particular, one has
$$ \sigma^{-1}\Big(\sigma\big(\{f^{\prime}=0\}\big)\Big)=
N(f^{\prime})=\{f^{\prime}=0\}\sqcup\{g_{f^{\prime},\sigma}=0\}$$
\noindent
with $N(f^{\prime})$ and $g_{f^{\prime},\sigma}$ defined in the item (2) of
Theorem \ref{ElementaryNisSquare}. Thus, replacing for brevity
$g_{f^{\prime},\sigma}$ by $g^{\prime}$, one gets the following
elementary distinguished square in the category of $U$-smooth schemes (see [Def. 2.1, Vo]):
\begin{equation}
\label{ElemNisSquareDiagram}
    \xymatrix{
(\mathcal X^{\prime})^{0}_{N(f^{\prime})}= (\mathcal
X^{\prime})^{0}_{f^{\prime}g^{\prime}}\ar[rr]^{\inc}
\ar[d]_{\sigma^0_{f^{\prime}g^{\prime}}} &&
(\mathcal X^{\prime})^0_{g^{\prime}}\ar[d]^{\sigma^0_{g^{\prime}}}  &\\
(\Aff^1\times U)_{N(f^{\prime})}\ar[rr]^{\inc}&&\Aff^1\times U &\\
}
\end{equation}

The base change of this square by means of the morphism
$U \times_{\Spec(k)} T \to U$
is an elementary distinguished square in the category of smooth
$U \times_{\Spec(k)} T$-schemes. Thus this new square can be used to build up principal
$G_U$-bundles over
$(\Aff^1\times U) \times_{\Spec(k)} T$
beginning with certain data over the three other corners. This is what we are
going to do below in this Section.


Set
$$Q^{\prime}_X=q^{\prime}_X \times id_T: \mathcal X^{\prime} \times_{\Spec(k)} T \to X \times_{\Spec(k)} T,$$
$$Q^{\prime}_U=q^{\prime}_U \times id_T: \mathcal X^{\prime} \times_{\Spec(k)} T \to U \times_{\Spec(k)} T.$$
Consider $(Q^{\prime}_X)^*(P_X)$ as a principal
$(q^{\prime}_U)^*(G_U)=\theta^*(G_{\const})$-bundle via the isomorphism
$\Phi$. Recall that $P_X$ is trivial as a principal $G_X$-bundle over
$X_{\text{f}} \times_{\Spec(k)} T$. Therefore,
$(Q^{\prime}_X)^*(P_X)$ is trivial as a
principal
$\rho_S^*(G_{\mathcal X})$-bundle
over
$\mathcal X^{\prime}_{f^{\prime}} \times_{\Spec(k)} T$.
So, $(Q^{\prime}_X)^*(P_X)$ is trivial over
$\mathcal X^{\prime}_{f^{\prime}} \times_{\Spec(k)} T$, when regarded as a principal
$\rho_S^*(G_{{\const}})$-bundle via the isomorphism $\Psi_S$.
\par

Thus, regarded as a principal $G_U$-bundle,
the bundle $(Q^{\prime}_X)^*(P_X)$ over $\mathcal X^{\prime} \times_{\Spec(k)} T$
becomes trivial over $\mathcal X^{\prime}_{f^{\prime}} \times_{\Spec(k)} T$, and a
fortiori over $(\mathcal X^{\prime})^{0}_{f^{\prime}g^{\prime}} \times_{\Spec(k)} T$.
Now, taking the trivial $G_U$-bundle over $(\Aff^1\times
U)_{N(f^{\prime})}$ and an isomorphism
\begin{equation}
\label{Skleika}
\psi: G_U \times_U [(\mathcal X^{\prime})^{0}_{N(f^{\prime})} \times_{\Spec(k)} T]  \to
(Q^{\prime}_X)^*(P_X)|_{[(\mathcal X^{\prime})^{0}_{N(f^{\prime})}\times_{\Spec(k)} T] }
\end{equation}
of principal $G_U$-bundles, we get {\it a principal $G_U$-bundle $\mathcal G_t$
over} $(\Aff^1\times U) \times_{\Spec(k)} T$ such that
\begin{itemize}
\item[(1)]
$\mathcal G_t|_{[(\Aff^1\times U)_{N(f^{\prime})}\times_{\Spec(k)} T]}=G_U \times_U [(\Aff^1\times U)_{N(f^{\prime})}\times_{\Spec(k)} T]$
\item[(2)]
there is an isomorphism
$\varphi: [(\sigma^0_{g^{\prime}}) \times \id_T]^*(\mathcal G_t) \to (Q^{\prime}_X)^*(P_X)|_{[(\mathcal X^{\prime})^{0}_{g^{\prime}}\times_{\Spec(k)} T ]}$
of the principal $G_U$-bundles,
where $(Q^{\prime}_X)^*(P_X)$
is regarded as a principal $G_U$-bundle via the $\mathcal X^{\prime}$-group scheme isomorphism $\Phi$ from
(\ref{KeyEquation}); 
\item[(3)]
$(\inc \times \id_T)^*(\varphi)=\psi$.
\end{itemize}


Finally, form the following diagram
\begin{equation}
\label{DeformationDiagram2}
    \xymatrix{
(\Aff^1 \times U)\times_{\Spec(k)} T\ar[drr]_{\pr_U\times \id}&&(\mathcal
X^{\prime})^0_{g^{\prime}}\times_{\Spec(k)} T \ar[d]^{}
\ar[ll]_{\sigma^0_{g^{\prime}}\times \id}\ar[d]_{q^{\prime}_U \times \id}
\ar[rr]^{Q^{\prime}_X=q^{\prime}_X \times \id}&&X \times _{\Spec(k)} T&\\
&&U\times_{\Spec(k)} T \ar[urr]_{\can \times \id}\ar@/_0.8pc/[u]_{\Delta^{\prime}\times \id} &\\
    }
\end{equation}
This diagram is well-defined, since by Item (4) of Theorem~\ref{ElementaryNisSquare}
the image of the morphism
$\Delta^{\prime}$ lands in $(\mathcal X^{\prime})^0_{g^{\prime}}$.


\begin{thm}
\label{MainData}
The principal $G_{U}$-bundle $\mathcal G_t$ over $(\Aff^1 \times U)\times _{\Spec(k)} T$,
the monic polynomial $N(f^{\prime}) \in \mathcal O[t]$, the diagram~{\rm(\ref{DeformationDiagram2})},
and the isomorphism $\Phi$ from~{\rm(\ref{KeyEquation})} constructed above,
satisfy the following conditions {\rm(1*)}--{\rm(6*)}.
\par\smallskip
(1*) $q^{\prime}_U=\pr_U\circ\sigma^0_{g^{\prime}}$,
\par\smallskip
(2*) $\sigma^0_{g^{\prime}}$ is \'etale,
\par\smallskip
(3*) $q^{\prime}_U\circ\Delta^{\prime}=\id_U$,
\par\smallskip
(4*) $q^{\prime}_X\circ\Delta^{\prime}=\can$,
\par\smallskip
(5*) the restriction of $\mathcal G_t$ to $(\Aff^1 \times U)_{N(f^{\prime})} \times _{\Spec(k)} T$ is a trivial
$G_U$-bundle,
\par\smallskip
(6*) $(\sigma^0_{g^{\prime}} \times \id)^*(\mathcal G_t)$ and $(Q^{\prime}_X)^*(P_X)$ are isomorphic as
$G_U$-bundles over $(\mathcal X^{\prime})^0_{g^{\prime}}\times_{\Spec(k)} T$. Here $(Q^{\prime}_X)^*(P_X)$ is regarded as a principal
$G_U$-bundle via the group scheme isomorphism $\Phi$ from {\rm(\ref{KeyEquation})}.
\par\smallskip
\end{thm}

\begin{proof} By the very choice of $\sigma$ it is an $U$-scheme
morphism, which proves (1*). By the choice of $(\mathcal X^{\prime})^0\hra\mathcal X^{\prime}$
in Theorem~\ref{ElementaryNisSquare},
the morphism $\sigma$ is \'etale on this subscheme, hence one gets (2*).
Property (3*) holds for $\Delta^{\prime}$ since
$(q^{\prime}_X: \mathcal X^{\prime} \to U, f^{\prime},\Delta^{\prime})$
is a nice triple
and, in particular,
$\Delta^{\prime}$ is a section of
$q^{\prime}_U$. Property (4*) can be established as follows:
$$ q^{\prime}_X\circ\Delta^{\prime}=
(q_X\circ\rho_S)\circ\Delta^{\prime}=q_X\circ\Delta=\can. $$
\noindent
The first equality here holds by the definition of $q^{\prime}_X$,
the second one holds since $\rho_S$ is a
morphism of nice triples; the third one follows from equality
(\ref{DeltaQx}). Property (5*) is just Property (1) in the above
construction of $\mathcal G_t$.
Property (6*) is precisely Property (2) in the construction of
$\mathcal G_t$.
\end{proof}
The composition
$$
s^{\prime}:= \sigma^0_{g^{\prime}} \circ \Delta^{\prime}:U\to\Aff^1\times U
$$ is a section of the projection $\pr_U$
by the properties (1*) and (3*). Recall that $G_U$ over $U$ is the original group scheme $G$ introduced in the very
beginning of this Section.
Since $U$ is semi-local, we may assume that $s^{\prime}$ {\it is the zero section of the projection} $\Aff^1_U \to U$.
Furthermore, making an affine transformation of $\Aff^1_U \to U$, we may assume that
$N(f^{\prime})(1) \in \mathcal O$ {\it is invertible}.
\begin{cor}[{\bf=Theorem \ref{MainHomotopy}}]
\label{Ptandht}
The principal $G_U$-bundle $\mathcal G_t$ over $\Aff^1_{[U\times_{\Spec(k)} T]}$ and the monic polynomial
$N(f^{\prime}) \in \mathcal O[t]$
are subject to the following conditions
\par\smallskip
(i) the restriction of $\mathcal G_t$ to $[(\Aff^1\times U)_{N(f^{\prime})}\times_{\Spec(k)} T]$ is a trivial
$G_U$-bundle,
\par\smallskip
(ii) the restriction of $\mathcal G_t$ to $\{0\} \times U \times_{\Spec(k)} T$ is the original $G_U$-bundle $P_U$.
\par\smallskip
(iii) $N(f^{\prime})(1) \in \mathcal O$ is invertible.
\end{cor}

\begin{proof}
The property (i) is just the property (5*) above. Now by (6*) the $G_U$-bundles
$$\mathcal G_t|_{\{0\} \times U \times_{\Spec(k)} T}=(s^{\prime}\times \id)^*(\mathcal G_t)=
{(\Delta^{\prime}\times \id)}^*((\sigma^0_{g^{\prime}}\times \id)^*(\mathcal G_t)) \ \text{and}$$
$$ \ {(\Delta^{\prime}\times \id)}^*(Q^{\prime}_X)^*(P_X)=(\can\times \id)^*(P_X)$$
are isomorphic, since ${\Delta^{\prime}}^*(\Phi)=\id_{G_U}$.
It remains to recall that the principal $G_U$-bundle $(\can\times \id)^*(P_X)$ is the original $G_U$-bundle $P_U$
by the choice of $P_X$. Whence the Corollary.

\end{proof}

%

\section{Group of points of an isotropic simple group}

In this section we establish several results concerning groups of
points of simple groups, in particular, \label{GroupPart} Lemma
\ref{Surjectivity}, Proposition \ref{Key} and Lemma
\ref{AlphaUtimesbeta}, which play crucial role in the rest of the
paper.

\begin{defn}\label{E_P}
Let $G$ be a
reductive group scheme over a commutative ring $A$. Assume that $G$ has a proper
parabolic subgroup $P=P^+$ over $A$, and denote by $U^+$ its
unipotent radical.
By \cite[Exp.~XXVI Cor.~2.3, Th. 4.3.2]{SGA3} there exists a
parabolic subgroup $P^-$ of $G$ opposite to $P^+$, and
by~\cite[Exp.~XXVI Cor.~1.8]{SGA3} any two such subgroups are
conjugate by an element of $U^+(A)$. Let $U^-$ be the unipotent
radical of $P^-$. For any commutative $A$-algebra $B$ we define the
$P$-elementary subgroup $E_P(B)$ of the group $G(B)$ as follows:
$$ E_P(B)=\langle U^+(B),U^-(B)\rangle. $$
\end{defn}
\begin{lem}
\label{Surjectivity} Let $B\to\bar B$ be a surjective $A$-algebra
homomorphism. Then the induced homomorphism of elementary groups
$E_P(B)\to E_P(\bar B)$ is also surjective.
\end{lem}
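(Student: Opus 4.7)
The plan is to reduce the statement to the surjectivity of the induced maps on the unipotent radicals $U^{\pm}(B) \to U^{\pm}(\bar B)$, and then deduce this surjectivity from the fact that $U^{\pm}$, as $A$-schemes, are isomorphic to affine spaces.

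First I would unwind the definition: by construction $E_P(B) = \langle U^+(B), U^-(B)\rangle$, and the induced map $\varphi: E_P(B) \to E_P(\bar B)$ is a group homomorphism. Hence it suffices to show that every element of $U^+(\bar B)$ and of $U^-(\bar B)$ is hit by $\varphi$, since these sets together generate $E_P(\bar B)$. In other words, I must verify that the canonical maps $U^+(B) \to U^+(\bar B)$ and $U^-(B) \to U^-(\bar B)$ are surjective.

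The core observation is that the unipotent radical $U^{\pm}$ of a parabolic subgroup of a reductive $A$-group scheme is, as an $A$-scheme, isomorphic to an affine space $\Aff^N_A$ for some $N$ (equal to the number of positive, respectively negative, roots outside the Levi); this is \cite[Exp.~XXVI, Cor.~2.5]{D-G}. Consequently, for any $A$-algebra $C$ one has $U^{\pm}(C) \cong C^N$ as a set, functorially in $C$. Since by assumption $B \to \bar B$ is surjective as a ring map, the induced map $B^N \to \bar B^N$ is surjective, which under the scheme isomorphism above translates into surjectivity of $U^{\pm}(B) \to U^{\pm}(\bar B)$.

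Combining the two steps: given $\bar e \in E_P(\bar B)$, write it as a finite product $\bar e = \bar u_1 \bar u_2 \cdots \bar u_r$ with each $\bar u_i \in U^{\epsilon_i}(\bar B)$, $\epsilon_i \in \{+,-\}$. Lift each $\bar u_i$ to some $u_i \in U^{\epsilon_i}(B)$ via the preceding step, and set $e = u_1 u_2 \cdots u_r \in E_P(B)$; then $\varphi(e) = \bar e$. The only non-routine ingredient is the affine-space structure on $U^{\pm}$, which is provided by the cited result from SGA~3; everything else is formal.
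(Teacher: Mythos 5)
Your argument is correct and is essentially the paper's own proof: both reduce the claim to surjectivity of $U^{\pm}(B)\to U^{\pm}(\bar B)$ via the cited result \cite[Exp.~XXVI Cor.~2.5]{D-G} and then lift generators formally. The only nuance is that the corollary gives that $U^{\pm}$ are $A$-vector bundles of finite rank rather than necessarily trivial affine spaces, but surjectivity of $U^{\pm}(B)\to U^{\pm}(\bar B)$ holds just as well for a finitely generated projective module, so nothing is lost.
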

\begin{proof}
By~\cite[Exp.XXVI Cor. 2.5]{SGA3} the $A$-schemes $U^+$ and $U^-$
are isomorphic to $A$-vector bundles of finite rank. Thus, the
maps $U^{\pm}(B)\to U^{\pm}(\bar B)$ are surjective.
\end{proof}

Let $l$ be a field and $G_l$ be an isotropic simple
simply connected $l$-group scheme. Recall that an isotropic scheme
contains an $l$-split rank one torus $\Bbb G_{m,l}$. Choose and
fix two opposite parabolic subgroups $P_l=P^+_l$ and $P^-_l$ of
the $l$-group scheme $G_l$. Let $U^+_l$ and $U^-_l$ be their
unipotent radicals. We will be interested mostly in the group of
points $G_l(l(t))$. The following definition originates
from~\cite[Main theorem]{T}.
\begin{defn}
\label{G+} Define $G_l(l(t))^+$ as the subgroup of the group
$G_l(l(t))$ generated by $l(t)$-points of unipotent radicals of
all parabolic subgroups of $G_l$ defined over the field $l$.
\end{defn}

\begin{rem}
\label{AppearanceT-} Clearly, $l(t)=l(t^{-1})$. Thus,
$$ G_l(l(t))=G_l(l(t^{-1}))\qquad \text{and}\qquad
G_l(l(t))^{+}=G_l(l(t^{-1}))^{+}. $$
\end{rem}

By definition the group $G_l(l(t))^+$ is generated by unipotent
radicals of {\it all\/} $l$-parabolic subgroups, and thus contains
the elementary group $E_{P_l}(l(t))$, introduced in
Definition~\ref{E_P}. In fact they coincide.
\begin{prop}
\label{G+AndU+U-} The group $G_l(l(t))^+$ is generated by
$l(t)$-points of unipotent radicals of any two opposite parabolic
subgroups of the $l$-group scheme $G_{l}$. In particular, one has
the equality
\begin{equation}
\label{U+U-} G_l(l(t^{-1}))^{+}=\Big\langle U^{+}_l(l(t^{-1})),
U^{-}_l(l(t^{-1}))\Big\rangle=E_{P_l}(l(t^{-1})).
\end{equation}
\end{prop}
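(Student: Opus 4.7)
The equality $\langle U^+_l(l(t^{-1})),U^-_l(l(t^{-1}))\rangle=E_{P_l}(l(t^{-1}))$ is just Definition~\ref{E_P}, and by Remark~\ref{AppearanceT-} nothing is lost by working with $l(t)$ in place of $l(t^{-1})$. The substantive claim is therefore that for any pair $(Q^+,Q^-)$ of opposite $l$-parabolic subgroups of $G_l$ one has the equality $G_l(l(t))^+=\langle U_{Q^+}(l(t)),U_{Q^-}(l(t))\rangle$. The inclusion ``$\supseteq$'' is immediate from Definition~\ref{G+}, so the work is to show that $U_R(l(t))\subseteq\langle U_{Q^+}(l(t)),U_{Q^-}(l(t))\rangle$ for every proper $l$-parabolic $R$ of $G_l$. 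Observe first that an inclusion $R_1\subseteq R_2$ of parabolics induces the reverse inclusion $U_{R_2}\subseteq U_{R_1}$ on unipotent radicals, so it suffices to handle the case where $R$ is a \emph{minimal} $l$-parabolic.

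For the minimal case the plan is to apply the Borel--Tits conjugacy theorem. Over the infinite field $l$, any two maximal $l$-split tori of $G_l$ are conjugate by an element of $G_l(l)$, and once a maximal $l$-split torus $S$ is fixed the minimal $l$-parabolic subgroups containing $S$ are permuted transitively by the relative Weyl group $N_{G_l}(S)(l)/Z_{G_l}(S)(l)$. Because $G_l$ is simply-connected and isotropic, both conjugations can be realized inside $\langle U_{Q^+}(l),U_{Q^-}(l)\rangle$: the torus-conjugacy step is standard Borel--Tits, while representatives of the relative Weyl classes can be built from products of the form $u^+u^-u^+$ with $u^{\pm}\in U_{Q^{\pm}}(l)$, using that the relative root subgroups are contained in $U_{Q^+}$ or $U_{Q^-}$ according to the sign of the associated relative root. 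Composing these steps produces $g\in\langle U_{Q^+}(l),U_{Q^-}(l)\rangle\subseteq\langle U_{Q^+}(l(t)),U_{Q^-}(l(t))\rangle$ with $gRg^{-1}\in\{Q^+,Q^-\}$, and conjugation by $g$ carries $U_R(l(t))$ into $U_{Q^{\pm}}(l(t))$.

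The main technical obstacle will be invoking the Borel--Tits conjugacy in a form that keeps the conjugating element inside the elementary subgroup $\langle U_{Q^+}(l),U_{Q^-}(l)\rangle$ rather than merely in $G_l(l)$. Once this refinement is in hand, passage to the function field $l(t)$ is automatic, since an $l$-parabolic subgroup scheme of $G_l$ remains the same subgroup scheme after base change to $l(t)$ and $G_l(l)\subseteq G_l(l(t))$. Small-rank situations (for instance groups of relative rank one) may require more delicate input of Kneser--Tits type, but for isotropic simple simply-connected $G_l$ over an infinite field this is expected to follow from classical results.
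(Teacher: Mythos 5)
Your proposal diverges from the paper's argument and leaves the essential difficulty unresolved. The paper's proof is a short base-change trick: it passes to the group scheme $G_{l(t)}=G_l\times_{\Spec\,l}\Spec\,l(t)$ over the field $l(t)$, notes that $G_l(l(t))^+$ is contained in the analogous group generated by $l(t)$-points of unipotent radicals of all parabolics defined over $l(t)$, and then quotes \cite[Prop.6.2.(v)]{BT73}, which states precisely that over a field this latter group is generated by the points of the unipotent radicals of any two opposite parabolics; since $U^{\pm}$ are defined over $l$, one has $U^{\pm}_{l(t)}(l(t))=U^{\pm}_{l}(l(t))$, and the chain of inclusions closes up. You instead attempt to re-derive this generation statement by hand via conjugacy of minimal parabolics, and the step you yourself flag as ``the main technical obstacle'' --- realizing the conjugating elements inside $\langle U_{Q^+}(l),U_{Q^-}(l)\rangle$ rather than merely in $G_l(l)$ --- is exactly the content of the cited Borel--Tits result (equivalently, of the normality of the elementary subgroup in $G(K)$ for $K$ a field). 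Deferring it to ``classical results of Kneser--Tits type'' is not a proof; it is the whole point of the proposition.

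There is also a concrete flaw in your reduction. The statement concerns an \emph{arbitrary} pair of opposite parabolics $Q^{\pm}$, not necessarily minimal ones, and conjugation preserves the type of a parabolic, so for a minimal $R$ one cannot have $gRg^{-1}\in\{Q^+,Q^-\}$ unless $Q^{\pm}$ are themselves minimal. Your construction of relative Weyl representatives as products $u^+u^-u^+$ likewise relies on every relative root subgroup lying in $U_{Q^+}$ or $U_{Q^-}$ according to the sign of the root, which fails for non-minimal $Q^{\pm}$: the root subgroups attached to roots of the common Levi $Q^+\cap Q^-$ meet neither unipotent radical. Even after conjugating $R$ into a minimal parabolic $R'\subseteq Q^+$, one would still need $U_{R'}(l(t))\subseteq\langle U_{Q^+}(l(t)),U_{Q^-}(l(t))\rangle$, and $U_{R'}$ strictly contains $U_{Q^+}$ with the excess sitting in the Levi. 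Both gaps are repaired at once by the paper's route: base-change to the field $l(t)$ and invoke \cite[Prop.6.2.(v)]{BT73} directly.
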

\begin{proof}
Set $G_{l(t)}=G_l\times_{\Spec\,l}\Spec\,l(t)$. The group
$G_l(l(t))^{+}$ is contained in the subgroup of
$G_l(l(t))=G_{l(t)}(l(t))$ generated by $l(t)$-points of unipotent
radicals of all parabolic subgroups of the group scheme $G_{l(t)}$
defined over the field $l(t)$. By~\cite[Prop.6.2.(v)]{BT73} the
latter group is generated by $l(t)$-points of unipotent radicals
of any two opposite parabolic subgroups of $G_{l(t)}$, in
particular, by $l(t)$-points of $U_{l(t)}^+$ and $U^-_{l(t)}$.
Since $U^\pm_{l(t)}(l(t))=U^\pm_{l}(l(t))$, we have~\eqref{U+U-}.
\end{proof}

\begin{rem}
\label{ConvinienceOfU+U-} For any commutative ring $A$ and an $A$-algebra $B$,
and any reductive $A$-group scheme $G$ we can define the group
$G_A(B)^+$ as in the Definition~\ref{G+}, that is, as the subgroup
generated by $B$-points of unipotent radicals of all $A$-parabolic
subgroups of $G$. The question, whether this subgroup coincides
with $E_P(B)$ for an $A$-parabolic subgroup $P$ of $G$, is in
general rather subtle. See the paper {\rm \cite{PSt}} by V.~Petrov
and the second author for details.
\end{rem}


The following result is crucial for the sequel.
\begin{prop}
\label{Key}
One has the equality
\begin{equation}
\label{RaghunathanBT} G_l(l(t^{-1}))=G_l(l(t^{-1}))^{+}\cdot
G_l(l),
\end{equation}
where $G_l(l(t^{-1}))^{+}$ is the group defined in Definition
\ref{G+} {\rm(}see also Remark \ref{AppearanceT-}{\rm)}.
\end{prop}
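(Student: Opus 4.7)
Write $F := l(t^{-1})$ and note that $R \subset F$ is a DVR with uniformizer $t^{-1}$ and residue field $l$. The inclusion $G_l(F)^+ \cdot G_l(R) \subseteq G_l(F)$ is trivial, so the task is to prove $G_l(F) \subseteq G_l(F)^+ \cdot G_l(R)$. By Proposition \ref{G+AndU+U-}, $G_l(F)^+ = E_{P_l}(F) = \langle U_l^+(F), U_l^-(F)\rangle$, a group which only grows as $P_l$ shrinks; so I may assume $P_l$ is a minimal $l$-parabolic, with maximal $l$-split torus $T \subseteq P_l$ (nontrivial by isotropy) and Levi decomposition $P_l = L \cdot U_l^+$.

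My plan is to establish a classical Iwasawa decomposition $G_l(F) = G_l(R) \cdot P_l(F)$ and then analyze $P_l(F)$ further. For the Iwasawa step: over the completion $\hat F = l((t^{-1}))$ with ring of integers $\hat R = l[[t^{-1}]]$, Bruhat--Tits theory yields $G_l(\hat F) = G_l(\hat R) \cdot P_l(\hat F)$ since $G_l(\hat R)$ is a hyperspecial parahoric. Descent to $F$ uses smoothness and properness of the flag variety $G_l/P_l$: any $g \in G_l(F)$ projects to an $F$-point of the projective $l$-variety $G_l/P_l$, which by the valuative criterion extends to an $R$-point, and smoothness of $G_l \to G_l/P_l$ lets us lift to an $R$-point of $G_l$. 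This gives $g = k \cdot p$ with $k \in G_l(R)$ and $p \in P_l(F)$. Since $P_l(F) = L(F) \cdot U_l^+(F)$ and $U_l^+(F) \subseteq E_{P_l}(F)$, it remains to show $L(F) \subseteq E_{P_l}(F) \cdot G_l(R)$.

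Split $L$ as $T \cdot L^{\mathrm{ani}}$ up to isogeny, with $L^{\mathrm{ani}}$ the $l$-anisotropic semisimple kernel. Simply-connectedness of $G_l$ gives $X_*(T) = \bigoplus_{\alpha \in \Pi} \mathbb{Z}\alpha^\vee$, so $T(F)/T(R) \cong X_*(T)$ is generated by the classes of $\alpha^\vee(t^{-1})$ for simple relative roots $\alpha$. The classical $SL_2$-identity
$$\alpha^\vee(t^{-1}) = u_\alpha(-t^{-1}) \, u_{-\alpha}(t) \, u_\alpha(-t^{-1}) \cdot n_\alpha^{-1}, \quad n_\alpha \in N_{G_l}(T)(l) \subseteq G_l(R),$$
exhibits each such generator in $E_{P_l}(F) \cdot G_l(R)$, yielding the decomposition for $T$. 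For the anisotropic Levi $L^{\mathrm{ani}}$, one invokes Raghunathan-style reduction theory (see \cite{R1}) to conclude that $L^{\mathrm{ani}}(F) \subseteq G_l(R) \cdot E_{P_l}(F)$. Combining the two pieces yields the desired equality.

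\textbf{Main obstacle.} The hardest part is the treatment of the anisotropic Levi $L^{\mathrm{ani}}$: since in general $L^{\mathrm{ani}}(F) \ne L^{\mathrm{ani}}(R)$, one cannot trivially absorb it into $G_l(R)$, and the proof must exploit the specific structure of $L^{\mathrm{ani}}$ (forms of $SL_1(D)$, Spin groups of anisotropic quadratic forms, etc.) together with simple-connectedness of $G_l$ to conclude. The secondary subtlety is the descent of the Iwasawa decomposition from $\hat F$ to $F$, which hinges on properness of the flag variety and smoothness of the quotient map $G_l \to G_l/P_l$; together these let one replace the Bruhat--Tits factorization over the completion by one over $F$ itself.
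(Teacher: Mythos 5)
There is a genuine gap, and it sits exactly at the point you yourself flag as the ``main obstacle.'' Your architecture (Iwasawa decomposition $G_l(F)=G_l(R)\cdot P_l(F)$ via properness of $G_l/P_l$ and local triviality of $G_l\to G_l/P_l$, then analysis of the Levi) is plausible, but when you reach the anisotropic part of the Levi you write that ``one invokes Raghunathan-style reduction theory'' to get $L^{\mathrm{ani}}(F)\subseteq G_l(R)\cdot E_{P_l}(F)$. That sentence is not a proof: it is a restatement of the nontrivial content of the proposition for the anisotropic kernel, and it is precisely the part that cannot be obtained by formal manipulations with parabolics. (Note also that for $P_l$ minimal the Levi is $\Cent_{G_l}(S_l)$, whose radical may contain an anisotropic central torus in addition to $S_l$, so even your decomposition $L=T\cdot L^{\mathrm{ani}}$ with $L^{\mathrm{ani}}$ semisimple is incomplete.) The paper avoids your Iwasawa step entirely: it quotes the precise result \cite[Cor.~1.7]{R1}, which already asserts
$$G_l(l(t^{-1}))=G_l(l(t^{-1}))^{+}\cdot S_l(l(t^{-1}))\cdot \Cent_{G_l}(S_l)(R),$$
i.e.\ Raghunathan's theorem is exactly the statement that the whole centralizer of the maximal split torus --- anisotropic kernel included --- can be taken with coefficients in $R$. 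After that citation the only remaining work is to absorb $S_l(l(t^{-1}))$ into $G_l(l(t^{-1}))^{+}$.

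For that last step your idea is essentially the right one, but as written it has a secondary defect: you use the $SL_2$-identity $\alpha^\vee(t^{-1})=u_\alpha(-t^{-1})u_{-\alpha}(t)u_\alpha(-t^{-1})n_\alpha^{-1}$ with $u_{\pm\alpha}$ treated as one-parameter subgroups attached to \emph{relative} roots. Relative root subgroups of a non-split group are not one-dimensional, and the rank-one subgroup they generate need not be $SL_2$. What legitimizes both this identity and your claim that $X_*(S_l)$ is spanned by the relative simple coroots is the Borel--Tits split subgroup $H_l\subseteq G_l$ of \cite[Thm.~7.2]{BT65}, containing $S_l$ as a maximal torus with root system $\Psi'$, whose genuine one-parameter root subgroups lie inside unipotent radicals of $l$-parabolics of $G_l$; simple-connectedness of $H_l$ (the Humphreys corollary in \cite{BT72}) then gives $S_l(l(t^{-1}))\le H_l(l(t^{-1}))=\langle U_{\alpha,l}(l(t^{-1}))\rangle\le G_l(l(t^{-1}))^{+}$. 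This is exactly the argument in the paper. So: supply the precise reference for the Raghunathan decomposition (or prove the anisotropic case), and route the torus computation through $H_l$, and your proof closes.
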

\begin{proof}
This is proved in~\cite[Th\'eor\`eme 5.8]{Gille07}.
\end{proof}

Let $f(t)\in l[t]$ be a polynomial of degree $n=\deg(f)$ in $t$
such that $f(0)\neq 0$. We consider the reciprocal polynomial
$$ f^*(t^{-1}):=f(t)/t^n\in l[t^{-1}];$$
\noindent
clearly, $f^*(0)\neq 0$.
Conversely, if we are given a polynomial $g(t^{-1})\in l[t^{-1}]$
of degree $n=\deg(g)$ in $t^{-1}$ such that
$g(0)\neq 0$, we define the reciprocal polynomial
$$ g^*(t):=g(t^{-1})\cdot t^n\in l[t]. $$
\noindent
The above correspondences are mutually inverse. Further, when
$f(t)\in l[t]$ runs over all polynomials in $t$ with $f(0)\neq 0$,
then the reciprocal polynomial $f^*(t^{-1})$ runs over all
polynomials $g(t^{-1})\in l[t^{-1}]$ with $g(0)\neq 0$.
\par
Now let us return to the setting considered in (\ref{U+U-}) and
(\ref{RaghunathanBT}) and Remark~\ref{AppearanceT-}. Each
non-constant $f(t)\in l[t]$ admits a unique factorisation of the
form $f(t)=t^r\cdot g(t)$, where $g(t)\in l[t]$ and $g(0)\neq 0$.
Clearly, for each $h(t)\in l[t]$ with $h(0)\neq 0$ one gets the
following inclusions
$$ G_l\big(l[t]_{f}\big)\le G_l\big(l[t^{-1},t]_{g^*}\big)\le
G_l\big(l[t^{-1},t]_{g^*h^*}\big). $$
\noindent
This leads us to the following Lemma.

\begin{lem}\label{AlphaUtimesbeta}
Let $P_l$ be an arbitrary parabolic $l$-subgroup of $G_l$.
For each $\alpha\in G_l(l[t]_{f(t)})$ one
can find a polynomial $h(t)\in l[t]$, $h(0)\neq 0$, and elements
$$ u\in E_{P_l}\big(l[t^{-1},t]_{g^*h^*}\big),\qquad
 \beta\in G_l\big(l\big) $$
\noindent
such that
\begin{equation}
\label{AlphaEqUtimesBeta} \alpha = u\beta\in
G_l\big(l[t^{-1},t]_{g^*h^*}\big).
\end{equation}
\noindent
The chain of $l$-algebra inclusions
$l[t]_{fh}\subseteq l[t]_{tfh}=l[t,t^{-1}]_{gh}=
l[t^{-1},t]_{g^*h^*}$
shows that
$$ u\in E_{P_l}\big(l[t]_{tfh}\big),\quad\text{and}\quad
\alpha\in G_l\big(l[t]_{tfh}\big). $$
\end{lem}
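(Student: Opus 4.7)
The plan is to view $\alpha$ inside $G_l(l(t^{-1})) = G_l(l(t))$ and to apply the decomposition from Proposition \ref{Key} together with Proposition \ref{G+AndU+U-}. This produces a factorization $\alpha = u_0 \beta_0$ with $u_0 \in E_{P_l}(l(t^{-1}))$ and $\beta_0 \in G_l(R)$, where $R = l[t^{-1}]_{(t^{-1})}$. The task then reduces to finding a single polynomial $h(t) \in l[t]$ with $h(0) \neq 0$ such that $u_0$ and $\beta_0$ become defined over the two advertised rings; we will then set $u := u_0$ and $\beta := \beta_0$.

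For $\beta_0$, fix any closed embedding of $G_l$ into affine space. Then $\beta_0$ has finitely many coordinates in $R$, each of the form $p/q$ with $p, q \in l[t^{-1}]$ and $q(0) \neq 0$. Let $h_0 \in l[t^{-1}]$ be the product of these denominators; then $h_0(0) \neq 0$ and $\beta_0 \in G_l(l[t^{-1}]_{h_0})$.

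For $u_0$, write it as a finite word in elements of $U^{+}_l(l(t^{-1}))$ and $U^{-}_l(l(t^{-1}))$. Each $U^{\pm}_l$ is isomorphic as an $l$-scheme to an affine space by \cite[Exp.~XXVI, Cor.~2.5]{D-G}, so each letter of the word is specified by finitely many scalars in $l(t^{-1})$. Writing each such scalar as $p/q$ with $p, q \in l[t^{-1}]$, factor the maximal power of $t^{-1}$ from the denominator as $q = (t^{-1})^a q_0$ with $q_0(0) \neq 0$. The factor $(t^{-1})^a$ is already a unit in the Laurent polynomial ring $l[t^{-1}, t]$, so only the $q_0$ need to be inverted by $h$. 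Taking $h^*$ to be the product of $h_0$ together with all the $q_0$'s extracted from $u_0$, we obtain a polynomial $h^* \in l[t^{-1}]$ with $h^*(0) \neq 0$; each letter of $u_0$ then lies in $U^{\pm}_l(l[t^{-1}, t]_{h^*}) \subseteq U^{\pm}_l(l[t^{-1}, t]_{g^* h^*})$, whence $u_0 \in E_{P_l}(l[t^{-1}, t]_{g^* h^*})$, and similarly $\beta_0 \in G_l(l[t^{-1}]_{h^*}) \subseteq G_l(l[t^{-1}]_{g^* h^*})$.

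Finally, define $h(t) := t^{\deg h^*} h^*(t^{-1}) \in l[t]$; then $h(0)$ equals the leading coefficient of $h^*$, which is nonzero, so $h(0) \neq 0$, and $h^*$ is precisely the reciprocal of $h$ in the paper's sense. Setting $u := u_0$ and $\beta := \beta_0$ yields $\alpha = u \beta$ in $G_l(l[t^{-1}, t]_{g^* h^*})$ with the required properties, and the trailing chain of inclusions $l[t]_{fh} \subseteq l[t]_{tfh} = l[t, t^{-1}]_{gh} = l[t^{-1}, t]_{g^* h^*}$ is automatic. The main subtlety to anticipate is the need to separate the role of powers of $t^{-1}$ in the denominators of $u_0$, which cannot be absorbed into $h^*$ since $h^*(0) \neq 0$, from the role of the remaining polynomial factors $q_0$: the former are dealt with by passing to $l[t^{-1}, t]$ where $t$ is a unit, while the latter are precisely what the polynomial $h$ is designed to clear.
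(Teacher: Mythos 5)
Your proposal is correct and follows essentially the same route as the paper: apply Proposition \ref{Key} together with the identification $G_l(l(t^{-1}))^{+}=E_{P_l}(l(t^{-1}))$ from Proposition \ref{G+AndU+U-} to factor $\alpha=u\beta$, then clear denominators to land in the localizations $l[t^{-1},t]_{g^*h^*}$ and $l[t^{-1}]_{g^*h^*}$. The paper leaves the denominator-clearing (and the choice of $h$) implicit, whereas you spell it out — including the correct observation that powers of $t^{-1}$ occurring in the letters of $u$ need not be absorbed into $h^*$ because $t$ is already a unit in $l[t^{-1},t]$ — so your write-up is a faithful, slightly more detailed version of the intended argument.
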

\begin{proof}
As observed above, inclusions $\alpha\in
G_l\big(l[t^{-1},t]_{g^*}\big)\le
G_l\big(l[t^{-1},t]_{g^*h^*}\big)$ are obvious.
The equalities
(\ref{RaghunathanBT}) and (\ref{U+U-}) imply that there exists a
polynomial $h(t)\in l[t]$, $h(0)\neq 0$, and elements
$$ u\in E_{P_l}\big([t^{-1},t]_{g^*h^*}\big),\qquad
\beta\in G_l\big(l\big), $$
\noindent
such that $\alpha=u\beta$ in $G_l\big(l[t^{-1},t]_{g^*h^*}\big)$.
The last assertion of the Lemma follows from the obvious $l$-algebra inclusions
$$
l[t]_{fh}\subseteq l[t]_{tfh}=l[t,t^{-1}]_{gh}=
l[t^{-1},t]_{g^*h^*}.
$$
\end{proof}


\section{Principal $G$-bundles on a projective line}
\label{SecProjectiveLine}

The main result of the present section is
Corollary~\ref{TrivialOnFibraIsTrivialGlobal}, which implies Theorem~\ref{MainThmArithmetic}.

Let $B$ be a Noetherian commutative ring, and let $\Aff^1_B$ and $\Pro^1_B$
be the affine line and the projective line over $B$, respectively.
Usually we identify the affine line with a subscheme of the
projective line as follows
$\Aff^1_B=\Pro^1_B-(\{\infty\}\times\text{Spec}(B))$, where
$\infty=[0:1]\in\Pro^1$. Let $G$ be a semi-simple $B$-group
scheme, let $P$ a principal $G$-bundle over $\Aff^1_B$, and let
$p:P\to\Aff^1_B$ be the corresponding canonical projection.

For a monic polynomial
$$ f=f(t)=t^n+a_{n-1}t^{n-1}+\dots+a_0\in B[t] $$
\noindent
we set $P_f=p^{-1}((\Aff^1_B)_f)$. Clearly, it is a principal
$G$-bundle over $(\Aff^1_B)_f$. Further, we denote by
$$ F(t_0,t_1)=t_1^n+a_{n-1}t_1^{n-1}t_0+\dots+a_0t_0^n $$
\noindent
the corresponding homogeneous polynomial in two variables. Note
that the intersection of the principal open set in $\Pro^1_B$
defined by the inequality $F\neq 0$ with the affine line
$\Aff^1_B$ equals the principal open subset $(\Aff^1_B)_f$. As in
the previous section in the case where $a_0\neq 0$ we consider the
reciprocal polynomial
$f^{*}(t^{-1})\in B[t^{-1}]$ equal to $f(t)/t^n$. 
\begin{defn}
\label{PofPhiAndf} Let $\varphi: G_{(\Aff^1_B)_f} \to P_f$ be a
principal $G$-bundle isomorphism. We write $P(\varphi, f)$ for a
principal $G$-bundle over the projective line $\Pro^1_B$ obtained
by gluing $P$ and $G_{(\Pro^1_B)_F}$ over $(\Aff^1_B)_f$ via the
principal $G$-bundle isomorphism $\varphi$.
\end{defn}
\begin{rem}\label{PropertiesPofPhiAndf}
For any $\varphi$ and $f$ as above, the
principal $G$-bundles $P(\varphi,f)$ and $P(\varphi,fg)$ coincide
for each monic polynomial $g\in B[t]$.
\par
For any $\varphi$ and $f$, any monic polynomial $h(t)\in B[t]$
such that $h(0)\in B^{\times}$ is invertible, and any $\beta\in
G(B[t^{-1}]_{h^*})$ the principal $G$-bundles
$P(\varphi,f)$ and $P(\varphi\circ\beta,tfh)$ are isomorphic. In fact,
they differ by a co-boundary.
\end{rem}

\begin{lem}\label{RR}
Let $l$ be a field and $G_l$ be a semi-simple
$l$-group scheme. Let $f\in l[t]$ be a non-constant polynomial. Let $P$ be a
principal $G_l$-bundle over $\Aff^1_l$ such that $P_f$ is trivial
over $\Aff^1_f$. Let $\varphi:G_{\Aff^1_{f}}\to P_f$ be a
principal $G_l$-bundle isomorphism. Let $P(\varphi,f)$ be the
corresponding principal $G$-bundle over $\Pro^1_l$.
Then there exists an $\alpha\in
G(l[t]_f)$ such that the principal $G$-bundle
$P(\varphi\circ\alpha,f)$ is trivial over $\Pro^1_l$.
\end{lem}
\begin{proof}
By~\cite[Prop. 2.2]{C-TO}
one has
$$
\ker[\textrm{H}^1_{\text{\'et}}(l[t],G_l) \to \textrm{H}^1_{\text{\'et}}(l(t),G_l)]=* \ .
$$
So, we may assume that there is an
isomorphism $G_{\Aff^1_l}=P$ over $\Aff^1_l$. In this case the
above isomorphism $\varphi$ coincides with the right
multiplication by an element $\beta\in G_l(l[t]_f)$. Clearly,
$P(\beta\circ\beta^{-1},f)$ is trivial over $\Pro^1_l$. Thus,
$P(\varphi\circ\alpha,f)$ is trivial for $\alpha=\beta^{-1}$.
\end{proof}


\begin{cor}\label{BundlesOnP1}
Let $l$ be a field, and let $G_l$ be an isotropic simply connected semi-simple $l$-group
scheme with a parabolic $l$-subgroup $Q_l$.
Let $P$ be a $G_l$-bundle over $\Aff^1_l$. Further, let $f(t)\in
l[t]$ be a non-constant polynomial, $\varphi: G_{\Aff^1_f}\to
P_{\Aff^1_f}$ be a principal $G_l$-bundle isomorphism and let
$P(\varphi,f)$ be the corresponding principal $G_l$-bundle on the
projective line $\Pro^1_l$. Then there exist $h(t)\in l[t]$ and
$u\in E_{Q_l}(l[t]_{tfh})$ such that the principal $G_l$-bundle
$P(\varphi \circ u,tfh)$ is trivial over $\Pro^1_l$.
\end{cor}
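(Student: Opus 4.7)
The plan is to combine Lemma~\ref{RR}, Lemma~\ref{AlphaUtimesbeta}, and the two parts of Remark~\ref{PropertiesPofPhiAndf}. First I would apply Lemma~\ref{RR} to the data $(P,\varphi,f)$: this yields an element $\alpha\in G_l(l[t]_f)$ such that $P(\varphi\circ\alpha,f)$ is trivial over $\Pro^1_l$. All the remaining obstruction to triviality is now concentrated in $\alpha$, and the task becomes to rewrite $\alpha$ so that its ``elementary part'' $u$ can be exposed while the remainder is absorbed into the gluing data.

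To that end, write $f=t^r g$ with $g\in l[t]$ and $g(0)\neq 0$, and apply Lemma~\ref{AlphaUtimesbeta} to $\alpha$: this produces a polynomial $h(t)\in l[t]$ with $h(0)\neq 0$ and a factorization $\alpha=u\beta$ in $G_l(l[t]_{tfh})$, with $u\in E_{P_l}(l[t]_{tfh})$ and $\beta\in G_l(l[t^{-1}]_{g^*h^*})$. Since $E_{P_l}$ is generated by $l[t]_{tfh}$-points of the unipotent radicals of two opposite $l$-parabolics of $G_l$, we automatically have $u\in G_l(l[t]_{tfh})^+$, as required for the conclusion. Rescaling by units in $l^\times$, which does not change any of the localizations or bundles involved, we may also assume that $f$, $g$, and $h$ are all monic. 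The first part of Remark~\ref{PropertiesPofPhiAndf}, applied with the monic polynomial $th$, then yields
\begin{equation*}
P(\varphi\circ\alpha,f)=P(\varphi\circ\alpha,tfh)=P((\varphi\circ u)\circ\beta,tfh),
\end{equation*}
which is trivial by the choice of $\alpha$. Because $(gh)^*=g^*h^*$ with $gh$ monic and $(gh)(0)\neq 0$, the second part of the same remark, applied with $\varphi\circ u$ in place of $\varphi$ and $gh$ in place of $h$, gives
\begin{equation*}
P(\varphi\circ u,f)\cong P((\varphi\circ u)\circ\beta,tfgh).
\end{equation*}
The right-hand side equals $P((\varphi\circ u)\circ\beta,tfh)$ by a further application of the first part of the remark (absorbing the monic factor $g$). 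Chaining these identifications, $P(\varphi\circ u,f)$ is trivial, and a final use of the first part of the remark (with the monic factor $th$) upgrades this to triviality of $P(\varphi\circ u,tfh)$, which is the desired conclusion.

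The main obstacle is the discrepancy between the denominator $g^*h^*$ produced by Lemma~\ref{AlphaUtimesbeta} and the denominator $h^*$ prescribed in the second part of Remark~\ref{PropertiesPofPhiAndf}. This is resolved by absorbing $g$ into $h$: replacing $h$ by $gh$ in the remark is legitimate because $(gh)^*=g^*h^*$ and $gh$ remains monic with nonvanishing constant term, while the surplus factor $g$ appearing in the gluing polynomial $tfgh$ is absorbed via the first part of the remark, since $g$ divides $f$ and is already a unit in the localization $l[t]_{tfh}$. Beyond this bookkeeping, the argument is a straightforward chaining of Lemmas~\ref{RR} and~\ref{AlphaUtimesbeta} with the two halves of Remark~\ref{PropertiesPofPhiAndf}.
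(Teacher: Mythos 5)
Your argument is correct and follows essentially the same route as the paper's own proof: Lemma~\ref{RR} produces $\alpha$, Lemma~\ref{AlphaUtimesbeta} factors it as $u\beta$ with $u\in E_{P_l}(l[t]_{tfh})\subseteq G_l(l[t]_{tfh})^+$, and the two halves of Remark~\ref{PropertiesPofPhiAndf} absorb the extra monic factors and the piece $\beta$ defined near infinity. The only (harmless) imprecision is the notation $P(\varphi\circ u,f)$, since $\varphi\circ u$ is defined only over $(\Aff^1)_{tfh}$, so the bundle meant there is $P(\varphi\circ u,tfh)$ --- exactly the object appearing in the paper's final chain of isomorphisms.
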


\begin{proof}
By Lemma \ref{RR} there exists an $\alpha \in G_l(l[t]_f)$ such
that the principal $G_l$-bundle $P(\phi \circ \alpha , f)$ is
trivial.
\par
Let $f(t)=t^rg(t)$ be the unique factorisation such that $g(t)\in
l[t]$ and $g(0)\neq 0$. By Lemma~\ref{AlphaUtimesbeta} there exist an element
$h(t)\in l[t]$ with $h(0)\neq 0$ and elements
$$ u \in G_l(l[t]_{tfh})^{+},\qquad
\beta\in G_l(l) $$
\noindent
such that
\begin{equation}
\label{AlphaEqUtimesBeta} \alpha=u\beta\in
G_l(l[t^{-1},t]_{g^*h^*}).
\end{equation}
The following chain of principal $G_l$-bundle isomorphisms
completes the proof:
$$ G_l \times_{\text{Spec}(l)} \Pro^1_l =P(\varphi \circ \alpha, f)=
P(\varphi \circ \alpha,\ tfh) = P(\varphi \circ u \circ \beta,
tfh) \cong P(\varphi \circ u, \ tfh). $$
\noindent
Here all the equalities are obvious. The last isomorphism holds by Remark~\ref{PropertiesPofPhiAndf},
since $\beta\in G_l(l[t^{-1}]_{g^*h^*})$.

\end{proof}

Let $B^{\prime}$ be a Noetherian semi-local ring.
Let $G$ be a simple simply connected $B^{\prime}$-group scheme.
Let $\frak{m}_i\subseteq  B^{\prime}$, $i=1,2, \dots, n$, be all maximal ideals of $B^{\prime}$. Let $J$ be the intersection of all
$\frak{m}_i$, $1\le i\le n$. Then
$$
l:=B^{\prime}/J=l_1\times l_2\times\dots\times l_n,$$
where $l_i=B^{\prime}/\frak{m}_i$.
Let $G_l=G\otimes_{B^{\prime}} l$ be the fibre of $G$ over $\Spec(l)$.
In the sequel we write $\Pro^1$ and $\Aff^1$ for
$\Pro^1_{B^{\prime}}$ and $\Aff^1_{B^{\prime}}$ respectively, whereas $\Pro^1_l$
and $\Aff^1_l$ denote the projective line and the affine line over
$l$.

Let $f\in B^{\prime}[t]$ be a monic polynomial, and let $P$ be a
principal $G_{B^{\prime}}$-bundle over $\Aff^1$ such that
$P_{\Aff^1_f}$ is trivial. Let $\varphi: G_{\Aff^1_f} \to
P_{\Aff^1_f}$ be a principal $G$-bundle isomorphism, and let
$P(\varphi, f)$ be the corresponding principal $G$-bundle on
$\Pro^1$ (see Definition \ref{PofPhiAndf}).

\begin{thm}\label{NewBundle}
Assume that the group scheme $G$ over $B^{\prime}$ is
isotropic, simple and simply connected.
Then there exist a monic polynomial $h(t)\in B^{\prime}[t]$ and an
element $\alpha\in G(B^{\prime}[t]_{tfh})$ such that the principal
$G$-bundle $P(\varphi\circ\alpha,tfh)$ satisfies the condition
\begin{itemize}
\item[{\rm(}i{\rm)}] $P(\varphi\circ\alpha,tfh)|_{\Pro^1_l}$ is a
trivial principal $G_l$-bundle over the projective line
$\Pro^1_l$.
\end{itemize}
\end{thm}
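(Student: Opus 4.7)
The plan is to solve the problem modulo the Jacobson radical of $\mathcal O$ using Corollary~\ref{BundlesOnP1} on each residue field, and then lift the resulting element back to $\mathcal O$ via Lemma~\ref{Surjectivity}. Set $J := \bigcap_{i=1}^n \mathfrak m_i$, so that $l = \mathcal O/J = l_1\times\cdots\times l_n$ with $l_i = k(x_i)$ infinite (since $k\subseteq l_i$). Base change along $\mathcal O\to l$ sends $(P,f,\varphi)$ to $(\bar P,\bar f,\bar\varphi)$ over $\Aff^1_l$, and since the gluing construction of Definition~\ref{PofPhiAndf} commutes with base change we have $P(\varphi,f)|_{\Pro^1_l} = P(\bar\varphi,\bar f)$.

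For each $i$, apply Corollary~\ref{BundlesOnP1} to $\bar P|_{\Aff^1_{l_i}}$: it furnishes a non-constant $\bar h_i(t)\in l_i[t]$ and an element $\bar u_i\in G_{l_i}(l_i[t]_{t\bar f_i\bar h_i})^+$ such that $P(\bar\varphi_i\circ \bar u_i,\, t\bar f_i\bar h_i)$ is trivial on $\Pro^1_{l_i}$. The isotropy of $G$ over $\mathcal O$ provides a proper parabolic $P^+\subset G$, whose reduction $P_{l_i}^+$ is a proper parabolic of $G_{l_i}$; by Proposition~\ref{G+AndU+U-} together with the argument of Lemma~\ref{AlphaUtimesbeta} used inside the proof of Corollary~\ref{BundlesOnP1}, one may in fact choose $\bar u_i\in E_{P_{l_i}^+}(l_i[t]_{t\bar f_i\bar h_i})$. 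To assemble the tuples into a single datum over $l$, set $d := \max_i \deg\bar h_i$ and replace each $\bar h_i$ by $\bar h_i\cdot (t-1)^{d-\deg\bar h_i}$; by Remark~\ref{PropertiesPofPhiAndf} this substitution does not change the resulting bundle. Under the identification $l[t] = \prod_i l_i[t]$, the modified tuples then define a single monic $\bar h\in l[t]$ and an element $\bar u\in E_{P_l^+}(l[t]_{t\bar f\bar h})$ such that $P(\bar\varphi\circ\bar u,\,t\bar f\bar h)$ is trivial on $\Pro^1_l$.

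Finally, lift: choose any monic $h\in\mathcal O[t]$ reducing to $\bar h$. The quotient $\mathcal O\to l$ induces a surjection of rings $\mathcal O[t]_{tfh}\to l[t]_{t\bar f\bar h}$, so Lemma~\ref{Surjectivity} produces a lift $\alpha\in E_{P^+}(\mathcal O[t]_{tfh})\subseteq G(\mathcal O[t]_{tfh})$ of $\bar u$. Functoriality of the gluing construction under base change gives $P(\varphi\circ\alpha,\,tfh)|_{\Pro^1_l} = P(\bar\varphi\circ\bar u,\,t\bar f\bar h)$, which is trivial; this is the desired conclusion. The main obstacle to address carefully is the combination step in the middle paragraph: equalising the degrees of the $\bar h_i$'s so that $\bar h$ is a bona fide monic polynomial in $l[t]$ rather than a tuple of polynomials of varying degrees, and verifying that $\bar u$ lies in the elementary subgroup associated to $P_l^+$, the base change of the fixed parabolic $P^+\subset G$, rather than in a subgroup generated by uncoordinated parabolics over each $l_i$. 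Once this is done, the lifting step is an immediate application of Lemma~\ref{Surjectivity}.
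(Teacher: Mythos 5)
Your proof is correct and follows essentially the same route as the paper: reduce modulo $J$ to the closed fibre, apply Corollary~\ref{BundlesOnP1} to get a trivializing modification by an element of the elementary subgroup over $l$, and lift it to $\mathcal O[t]_{tfh}$ via Lemma~\ref{Surjectivity}. Your extra care in applying the corollary componentwise over each $l_i$ and equalising the degrees of the $\bar h_i$ (and in insisting that $\bar u$ lie in $E_{P_l^+}$ for the fixed parabolic $P^+\subset G$, so that Lemma~\ref{Surjectivity} applies) is a legitimate tightening of a point the paper glosses over, not a different argument.
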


\begin{proof}
We denote by $\overline f$ the image of $f$ in $l[t]$, by $\overline P$ the restriction of $P$ to
$\Aff^1_l$, by $\overline P(\overline\varphi,\overline f)$ the
restriction of $P(\varphi,f)$ to the projective line $\Pro^1_l$, etc. Let $Q$ be a parabolic $B^{\prime}$-subgroup
of $G$. By
Corollary~\ref{BundlesOnP1}
there exist a monic polynomial
$\overline h(t)\in l[t]$ such that $\overline h(0)\in l^{\times}$, and
an element
$$ u\in E_{Q_l}\big(l[t]_{t\overline f\,\overline h}\big)\le
G_l\big(l[t]_{t\overline f\,\overline h}\big) $$
\noindent
such that the principal $G_l$-bundle $\overline
P(\overline\varphi\circ u,t\overline f\,\overline h)$ is trivial
over $\Pro^1_l$.
\par
Choose a monic polynomial $h(t)\in B^{\prime}[t]$ of degree equal
to the degree of $\overline h(t)$ and such that $h(t)$ modulo $J$
coincides with $\overline h(t)$. Clearly, the homomorphism of $B^{\prime}$-algebras
$B^{\prime}[t]_{tfh}\to l[t]_{t\overline f\,\overline h}$ is surjective.
By Lemma \ref{Surjectivity} it
induces a surjective group homomorphism
$$ E_{Q}\big(B^{\prime}[t]_{tfh}\big)\to E_{Q}\big(l[t]_{t\overline f\,\overline h}\big)=
E_{Q_l}\big(l[t]_{t\overline f\,\overline h}\big). $$
Thus, there exists an $\alpha\in E_{Q}(B^{\prime}[t]_{tfh})\le G(B^{\prime}[t]_{tfh})$ such that
$\alpha$ equals $u$ modulo $J$;  we write $\overline\alpha=u$.
\par
Consider the $G$-bundle $P(\varphi \circ \alpha, tfh)$. We claim that
its restriction to the projective line $\Pro^1_l$ is trivial.
Indeed, one has the following chain of equalities
of principal $G_l$-bundles over $\Pro^1_l$:
$$ \overline P(\overline{\varphi\circ\alpha},\overline{tfh})=
\overline P(\overline\varphi\circ\overline\alpha,t\overline
f\,\overline h)=\overline P(\overline\varphi\circ u,t\overline
f\,\overline h), $$
\noindent
where the principal $G_l$-bundle $\overline
P(\overline\varphi\circ u,t\overline f\overline h)$ is trivial
over $\Pro^1_l$.
\end{proof}
We keep the same notation as introduced before Theorem~\ref{NewBundle}.
\begin{thm}
\label{ConstantOnFibreIsConstant}
Assume that the Noetherian semi-local ring $B^{\prime}$ contains a field $k$.
Let $G$ be a not necessarily isotropic simple simply connected $B^{\prime}$-group scheme. Let $E$ be a principal
$G$-bundle over $\Pro^1$ whose restriction to the closed fibre
$E_{\Pro^1_l}$ is trivial. Then $E$ is of the form:
$E=\pr^*(E_0)$, where $E_0$ is a principal $G$-bundle over
$\Spec(B^{\prime})$ and $\pr:\Pro^1\to\Spec(B^{\prime})$ is the
canonical projection.
\end{thm}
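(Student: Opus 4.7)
Set $P_0:=s_\infty^*(P)$; since $P|_{\Pro^1_l}$ is trivial, so are $P_0\otimes_{\mathcal O}l$ and $\pr^*(P_0)|_{\Pro^1_l}$. The plan is to show $P\cong\pr^*(P_0)$ in two stages: first handle the case $G=GL_n$ by cohomological semi-continuity on $\Pro^1_{\mathcal O}$, then reduce the general case to $GL_n$ by embedding $G$ in a general linear group and invoking Matsushima's affineness theorem.

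For $G=GL_n$, the bundle $P$ corresponds to a rank-$n$ vector bundle $E$ on $\Pro^1_{\mathcal O}$ with trivial closed fibre. By the Birkhoff--Grothendieck theorem, $E|_{\Pro^1_{\kappa(y)}}\cong\bigoplus_{i=1}^n\mathcal O(a_i(y))$ at every $y\in\Spec\mathcal O$, with $a_i$ vanishing at every closed point. Since every point of the semi-local scheme $\Spec\mathcal O$ specializes to some closed point, upper semi-continuity of the cohomological dimensions $h^0(\Pro^1_y,E_y(-1))=\sum_i(a_i(y))_+$ and $h^1(\Pro^1_y,E_y(-1))=\sum_i(-a_i(y))_+$ would force $a_i(y)=0$ for all $i,y$, so $E$ is fibre-wise trivial. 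Then $\pr_*E$ is locally free of rank $n$ by cohomology and base change, and the natural map $\pr^*\pr_*E\to E$ is an isomorphism on every fibre, hence an isomorphism; setting $E_0:=\pr_*E=s_\infty^*E$ gives $E=\pr^*(E_0)$. The main technical work is this step, specifically the propagation of fibre-wise triviality from closed to non-closed points of $\Spec\mathcal O$.

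For general simple simply-connected $G$, I would choose a closed embedding $\rho:G\hookrightarrow GL(V)$ into the general linear group of a free $\mathcal O$-module $V$ of finite rank and consider the associated $GL(V)$-torsor $Q:=P\times^G GL(V)$ on $\Pro^1_{\mathcal O}$; its closed fibre is trivial, so by the first step $Q=\pr^*(Q_0)$ with $Q_0:=s_\infty^*(Q)=P_0\times^G GL(V)$. The bundle $P$ is a $G$-reduction of $Q$, corresponding to a section $\sigma_P$ of the associated fibre bundle $Q/G=\pr^*(Q_0/G)\to\Pro^1_{\mathcal O}$ whose fibre is the homogeneous space $GL(V)/G$. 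By Matsushima's theorem (extended to arbitrary characteristic by Richardson and Haboush), $GL(V)/G$ is affine since $G$ is reductive, and hence its \'etale twisted form $Q_0/G$ is affine over $\Spec\mathcal O$. The identity $\Gamma(\Pro^1_A,\mathcal O_{\Pro^1_A})=A$ then gives $(Q_0/G)(\Pro^1_{\mathcal O})=(Q_0/G)(\mathcal O)$, so $\sigma_P$ factors uniquely as $\sigma_P=\pr^*(\tau)$ for some $\tau:\Spec\mathcal O\to Q_0/G$. Evaluating at $s_\infty$ identifies $\tau$ with the reduction of $Q_0$ corresponding to $P_0$, yielding $P\cong\pr^*(P_0)$.
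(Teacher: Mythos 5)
The paper does not actually prove Theorem \ref{ConstantOnFibreIsConstant}: it defers to Raghunathan's erratum \cite{R2}, remarking only that his arguments must be ``somewhat modified'' because here $G$ is not defined over the ground field. Your argument supplies a complete strategy along the standard lines (close in spirit to Raghunathan's), and it has the merit of working directly for the semi-local $\mathcal O$ and for $G$ defined only over $\mathcal O$ --- which is exactly the modification the paper alludes to but does not carry out. The $GL_n$ step (Birkhoff--Grothendieck on each fibre, upper semicontinuity of $h^0$ and $h^1$ of $E(-1)$ combined with the fact that every point of $\Spec(\mathcal O)$ specializes to a closed point, then cohomology and base change to get $\pr^*\pr_*E\isom E$) is correct; it is the classical Horrocks/Quillen-type argument and is where the hypothesis on the closed fibre is genuinely used.

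Two points in the reduction step are, however, stated only in their over-a-field form and need the relative versions over $\mathcal O$. First, the existence of a closed embedding $\rho:G\hra GL(V)$ with $V$ finite free: over the regular (hence normal) affine semi-local base $\mathcal O$ this holds --- faithful representations of reductive group schemes exist over normal affine Noetherian bases, and finitely generated projectives over a semi-local ring are free --- but it is a theorem, not a formality. Second, and more substantively, you invoke Matsushima--Richardson--Haboush to say $GL(V)/G$ is affine and then pass to the twisted form $Q_0/G$. Matsushima's criterion gives affineness of the \emph{fibres} $GL(V)_{\kappa(y)}/G_{\kappa(y)}$; what your argument actually uses, via $\Hom_{\mathcal O}(\Pro^1_{\mathcal O},Q_0/G)=(Q_0/G)(\mathcal O)$, is that $Q_0/G$ is affine \emph{over} $\Spec(\mathcal O)$, and fibrewise affineness does not formally imply this. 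One needs the relative statement that the fppf quotient $GL(V)_{\mathcal O}/G$ is representable by a scheme affine over $\mathcal O$ (a reductive closed subgroup scheme of a smooth affine group scheme has affine quotient over any base; this rests on geometric reductivity of $G$ over $\mathcal O$). Once that is in place, your twisting argument (using Zariski-local triviality of the $GL(V)$-torsor $Q_0$), the factorization of the section through $\Spec(\mathcal O)$, and the identification of the resulting reduction with $P_0=s_\infty^*(P)$ are all correct. With these two citations repaired the proof is complete, and it fills a gap the paper leaves open.
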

\begin{proof}
See Appendix,~\ref{horrockstypetheorem}.
\end{proof}

Let us state an important corollary of the above theorems.
\begin{cor}
\label{TrivialOnFibraIsTrivialLocal} Let $k$ be a field, and let $B^{\prime}$ be a semi-local Noetherian algebra over $k$.
Let $G$ be an isotropic simple simply connected $B^{\prime}$-group scheme. Further, let $P$ be a principal $G$-bundle over
$\Aff^1$. Assume that there exists a monic polynomial
$f\in B^{\prime}[t]$ such that the principal $G$-bundle
$P_{\Aff^1_f}$ is trivial.
Then the principal $G$-bundle $P$ is trivial.
\end{cor}
\begin{proof}
Let $f\in B^{\prime}[t]$ be a monic polynomial such that the
principal $G$-bundle $P_{\Aff^1_f}$ is trivial. Choose a principal
$G$-bundle isomorphism $\varphi:G_{\Aff^1_f}\to P_{\Aff^1_f}$. By
Theorem \ref{NewBundle} there exists a monic polynomial
$h(t)\in B^{\prime}[t]$ and an element $\alpha\in G(B^{\prime}[t]_{tfh})$ such that the restriction
$P(\varphi\circ\alpha,tfh)|_{\Pro^1_l}$ of the principal
$G$-bundle $P(\varphi\circ\alpha,tfh)$ to the projective line
$\Pro^1_l$ is a trivial principal $G_l$-bundle.
\par
By Theorem \ref{ConstantOnFibreIsConstant} the principal
$G$-bundle $P(\varphi\circ\alpha,tfh)$ is of the form:
$P(\varphi\circ\alpha,tfh)=\pr^*(P_0)$, where $P_0$ is a principal
$G$-bundle over $\text{Spec}(B^{\prime})$. Note that
$$ G|_{\{\infty\}\times {\text{Spec}(B^{\prime})}}\cong P(\varphi\circ\alpha,
tfh)|_{\{\infty\}\times {\text{Spec}(B^{\prime})}}, $$
\noindent
that is the restriction of
$P(\varphi\circ\alpha, tfh)$ to $\{\infty\}\times \text{Spec}(B^{\prime})$ is trivial.
Thus
$$ G_{\Pro^1}\cong P(\varphi\circ\alpha,tfh). $$
\noindent
Since the original principal $G$-bundle $P$ over $\Aff^1$ is
isomorphic to $P(\varphi\circ\alpha,tfh)|_{\Aff^1}$, it follows
that $P$ is trivial. This finishes the proof.

\end{proof}

\begin{cor}[{\bf=Theorem \ref{MainThmArithmetic}}]
\label{TrivialOnFibraIsTrivialGlobal}
Let $k$ be a field and let $B$ be a Noetherian $k$-algebra.
Assume that a group scheme $G$ over $B$ is
simple, simply connected and isotropic.
Further, let $P$ be a principal $G$-bundle over
$\Aff^1_B$. Assume that there exists a monic polynomial
$f\in B[t]$ such that the principal $G$-bundle
$P_{(\Aff^1_B)_f}$ is trivial and $f(1) \in B$ is invertible.
Then the principal $G$-bundle $P$ is trivial.
\end{cor}
\begin{proof}
It is routine to prove that there is a closed $B$-group scheme embedding
$G \hra GL_{N,B}$ for an $N > 0$. Since $f(1)$ is invertible, the principle $G$-bundle $P$
is trivial at the closed subscheme $\{1\} \times \Spec (B) \subset \Aff^1_B$.
By Corollary~\ref{TrivialOnFibraIsTrivialLocal}, for any maximal
ideal $m$ of $B$, the bundle $P_{\Aff^1_{B_m}}$ is trivial too.
Now~\cite[Korollar 3.5.2]{Mo}
completes the proof of
Corollary
\ref{TrivialOnFibraIsTrivialGlobal}.

\end{proof}


\section{Proofs of Theorems \ref{MainThmGeometric} and \ref{MainThm}, and of Corollary~\ref{HominvarCor}}
\label{ProofOfMainThm}


\begin{proof}[Proof of Theorem~\ref{MainThmGeometric}]
Substitute to Theorem \ref{MainThmArithmetic}
$B=\mathcal O \otimes_k A$, $P_t:=\mathcal G_t$, $h(t)=f(t) \otimes 1$
from Theorem \ref{MainHomotopy}. By Theorem
\ref{MainThmArithmetic}
the $G$-bundle $\mathcal G_t$ is trivial.
Now by the item (ii) of Theorem \ref{MainHomotopy} the original $G$-bundle $\mathcal G$ is trivial.

\end{proof}
\begin{proof}[Proof of Theorem~\ref{MainThm}]

Let $\mathcal G$ be a principal $G$-bundle over $R\otimes_\ZZ A$ that becomes trivial over
$K\otimes_\ZZ A$.
Clearly, there is a non-zero $f\in R$
such that $\mathcal G$ is trivial over $R_f\otimes_{\ZZ} A$.

Let $k'$
be the prime subfield of $R$.
It follows from Popescu's
theorem~\cite{P,Sw} that $R$ is a filtered inductive limit of smooth $k'$-algebras $R_\alpha$.
Then there exist an index $\alpha$, a reductive group scheme $G_\alpha$ over $R_\alpha$, a principal $G_\alpha$-bundle
$\mathcal G_\alpha$ over $R_\alpha\otimes_{\ZZ} A$, and an element $f_\alpha\in R_\alpha$ such that
$G = G_\alpha\times_{\Spec(R_\alpha)} \Spec(R)$,
$\mathcal G\cong {\mathcal G}_\alpha\times_{\Spec(R_\alpha\otimes_{\ZZ} A)} \Spec(R\otimes_\ZZ A)$ as principal $G$-bundles,
$f$ is the image of $f_\alpha$ under the map
$\phi_\alpha : R_\alpha\to R$, and ${\mathcal G}_\alpha$ is trivial over $(R_\alpha)_{f_\alpha}\otimes_\ZZ A$.

If the field $k'$ is infinite, then for each maximal ideal $m_i$ in $R$ ($i = 1,\ldots, n$)
set $p_i=\phi_\alpha^{-1}(m_i)$. The map $\phi_\alpha$ induces a map
of semi-local rings $(R_\alpha)_{p_1,\ldots,p_n}\to R$.
Since the principal $G_\alpha$-bundle $\mathcal G_\alpha$ is trivial over
$(R_\alpha)_{f_\alpha}\otimes_\ZZ A\cong(R_\alpha)_{f_\alpha}\otimes_{k'} (k'\otimes_\ZZ A)$,
by Theorem~\ref{MainThmGeometric} the bundle
${\mathcal G}_\alpha$ is trivial over
$(R_\alpha)_{p_1,\ldots,p_n}\otimes_{k'} (k'\otimes_\ZZ A)\cong (R_\alpha)_{p_1,\ldots,p_n}\otimes_\ZZ A$.
Whence the $G$-bundle $\mathcal G$ is trivial over $R\otimes_\ZZ A$.

Now consider the case where the field $k'$ is finite. Since $R$ contains an infinite field by the assumption of the theorem,
$R$ also contains a field $k'(t)$ of rational functions in one variable $t$ over $k'$. Set
$R'_\alpha=R_\alpha\otimes_{k'}k'(t)$, then the map $\phi_\alpha$ can be decomposed as follows
$$
R_\alpha\to R_\alpha\otimes_{k'}k'(t) = R'_\alpha\xrightarrow{\psi_\alpha} R.
$$
Set $G'_\alpha=G_\alpha\times_{\Spec(R_\alpha)}\Spec(R'_\alpha)$,
${\mathcal G'}_\alpha={\mathcal G}_\alpha\times_{\Spec(R_\alpha\otimes_\ZZ A)}\Spec(R'_\alpha\otimes_\ZZ A)$,
$f'_\alpha=f_\alpha\otimes 1\in R'_\alpha$. Then $R'_\alpha$ is a smooth $k'(t)$-algebra, and the principal
$G'_\alpha$-bundle ${\mathcal G'}_\alpha$
is trivial over $(R'_\alpha)_{f'_\alpha}\otimes_\ZZ A$.
Arguing exactly as in the previous case with the field $k'(t)$ instead of $k'$, we conclude, by means of Theorem~\ref{MainThmGeometric},
that the $G$-bundle $\mathcal G$ is trivial over $R\otimes_\ZZ A$.
\end{proof}

%
%


\begin{proof}[Proof of Corollary~\ref{HominvarCor}]
Consider the commutative diagram
\begin{equation}
\xymatrix{
H^1_{\et}(R[t],G)\ar[rr]^{t=0}\ar[d]_{} &&
H^1_{\et}(R,G)\ar[d]^{}  &\\
H^1_{\et}(K[t],G)\ar[rr]^{t=0}&& H^1_{\et}(K,G).
}
\end{equation}
Since $K$ is perfect, the bottom arrow is bijective by the main result of~\cite{RR}. Therefore,
any element $\xi\in H^1_{\et}(R[t],G)$ having trivial image in $H^1_{\et}(R,G)$ also has
trivial image in $H^1_{\et}(K[t],G)$. By Theorem~\ref{MainThm}, for any maximal ideal $m\subseteq R$ the map
$$
H^1_{\et}(R_m[t],G)\to H^1_{\et}(K[t],G)
$$
has trivial kernel. Therefore, for any maximal ideal $m$, the image of $\xi$ in $H^1_{\et}(R_m[t],G)$ is trivial as well.
By~\cite[Korollar 3.5.2]{Mo} this implies that $\xi$ is trivial.

\end{proof}

\section{Semi-simple case}
\label{SectSemi-Simple}

In the present Section we show how Theorem~\ref{MainThmGeometric}
extends to the case of semi-simple simply connected groups; this is Theorem~\ref{MainThmGeometricSemi-Simple}
below. One readily sees that
Theorem~\ref{MainThm} and Corollary~\ref{HominvarCor} extend to semi-simple simply connected groups
as well, once we substitute the isotropy condition imposed in these statements by the same one as
in Theorem~\ref{MainThmGeometricSemi-Simple}.

By~\cite[Exp. XXIV 5.3, Prop. 5.10]{SGA3} the category of semi-simple simply connected
group schemes over a Noetherian domain $R$ is semi-simple. In
other words, each object has a unique decomposition into a product
of indecomposable objects. Indecomposable objects can be described
as follows. Take a domain $R^{\prime}$ such that $R\subseteq
R^{\prime}$ is a finite \'{e}tale extension and a simple
simply connected group scheme $G^{\prime}$ over $R^{\prime}$. Now,
applying the Weil restriction functor $\R_{R^{\prime}/R}$ to the
$R$-group scheme $G^{\prime}$ we get a simply connected $R$-group
scheme $\R_{R^{\prime}/R}(G^{\prime})$, which is
an indecomposable object in the above category. Conversely, each
indecomposable object can be constructed in this way.

\begin{defn}\label{DefSSIsotropy}
We say that an indecomposable semi-simple simply connected
group schemes
$H=\R_{R^{\prime}/R}(H^{\prime})$
over a Noetherian domain $R$ is \emph{isotropic} if $H^{\prime}$ is isotropic.
\end{defn}


\begin{thm}
\label{MainThmGeometricSemi-Simple}
Let $k$ be an infinite field. Let
$\mathcal O$ be the semi-local ring of finitely many closed points on a
smooth irreducible $k$-variety $X$ and let $K$ be its field of
fractions.
 Let $G$ be a semi-simple simply connected
$\mathcal O$-group scheme all of whose indecomposable factors are
isotropic in the sense of Definition~\ref{DefSSIsotropy}. Then for any Noetherian $k$-algebra $A$
the map
$$ H^1_{\text{\rm\'et}}(R \otimes_k A,G)\to H^1_{\text{\rm\'et}}(K \otimes_k A,G), $$
\noindent
induced by the inclusion $R$ into $K$, has trivial kernel.
\end{thm}

\begin{proof}
Take a decomposition of $G$ into indecomposable factors
$G=G_1\times G_2\times\dots\times G_r$. Clearly, it suffices to
check that for each index $i$ the kernel of the map
$$ H^1_{\text{\'et}}(R \otimes_k A,G_i)\to H^1_{\text{\'et}}(K \otimes_k A,G_i) $$
\noindent
is trivial. We know that there exists a finite \'{e}tale extension
$R^{\prime}_i/R$ such that $R^{\prime}_i$ is a domain and the Weil
restriction $\R_{R^{\prime}_i/R}(G^{\prime}_i)$ coincides with
$G_i$.
\par
The Faddeev---Shapiro Lemma \cite[Exp. XXIV Prop. 8.4]{SGA3} states
that there is a canonical isomorphism
$$
H^1_{\text{\'et}}\big(R \otimes_k A, \R_{R^{\prime}_i/R}(G^{\prime}_i)\big)
\cong H^1_{\text{\'et}}\big(R^{\prime} \otimes_k A,G_i\big)
$$
\noindent
that preserves the distinguished point.
To complete the proof, it only remains to apply Theorem
\ref{MainThm} to the semi-local regular ring $R^{\prime}_i$, its
fraction field $K_i$ and the simple $R^{\prime}_i$-group scheme
$G^{\prime}_i$.

\end{proof}






\section{Appendix}
\subsection{Almost elementary fibration}
\label{ArtinsNeighb}

In this section we prove
Propositions~\ref{ArtinsNeighbor}
and~\ref{CartesianDiagram}.

\begin{proof}[Proof of Proposition~\ref{ArtinsNeighbor}]
The proof almost literally follows the proof of the original Artin's result~\cite[Exp. XI, Prop. 3.3]{LNM305}.
Shrinking $X$, may assume that $X \subset \Aff^r_k$ is affine and still contains the points
$x_1,x_2,\dots,x_n$. Set $x:= \coprod\limits^n_{j=1}x_i$.
Let $X_0$ be the closure of $X$ in $\Pro^r_k$.
Let $\bar X$ is the normalization of $X_0$ and set $Y=\bar X - X$ with the induced reduced structure.
Let $Z \subset X$ be a subset of $\bar X$ consisting of all non-regular points of $\bar X$.
By \cite[Cor. 6.12.5]{EGAIV2} the set $Z$ is Zariski closed in $\bar X$. Since $\bar X$ is normal
we conclude that
$\dim Z \leq n-2$.
Since $X$ is $k$-smooth
one has an inclusion
$Z \subset Y$.
Summarizing one has
\begin{itemize}
\item[(i)]
$Z \subset Y$,
\item[(ii)]
$\dim \bar X = \dim X = n$,
\item[(iii)]
$\dim Y = n-1$,
\item[(iv)]
$\dim Z \leq n-2$.
\end{itemize}
Shrinking $X$ and following Artin's procedure one can construct a diagram of the form
\begin{equation}
\label{SquareDiagram}
    \xymatrix{
     X\ar[drr]_{p}\ar[rr]^{j}&&
\overline X^{\prime} \ar[d]_{\overline p}&&Y^{\prime} \ar[ll]_{i}\ar[lld]_{q} &\\
     && S  &\\    }
\end{equation}
subject to conditions (i), (ii) of Definition~\ref{DefnElemFib}
and such that $x \subset X$ and $S$ is an affine open subset in $\Pro^{n-1}_k$
and $i$ is a closed imbedding and $Y^{\prime}$ is a regular scheme.
Moreover, the restriction of
$q \otimes_k \bar k: Y^{\prime} \otimes_k \bar k \to S\otimes_k \bar k$
to the reduced subscheme
$(Y^{\prime} \otimes_k \bar k)_{red}$ is a finite \'etale morphism
all of whose fibres are non-empty
(here $\bar k$ is the algebraic closure of $k$).
In this case for each irreducible component
$Y^{\prime}_r$ of $Y^{\prime}$
the restriction
$q|_{Y^{\prime}_r}: Y^{\prime}_r \to S$ is a finite surjective morphism.
Since
$Y^{\prime}_r$, $S$ are regular irreducible schemes of the same dimension,
the morphism
$q|_{Y^{\prime}_r}$
is finite flat
(see Grothendieck~\cite[Cor. 17.18]{E}).
Thus $q$ is subject to the condition (iii) of Definition~\ref{DefnElemFib}.
Finally, the ideal sheaf $I_{Y^{\prime}}$ defining the closed subscheme
$Y^{\prime}$ in $\overline X^{\prime}$ is locally principal. In fact, $S$ is regular
and $\overline p$ is smooth. So, $\overline X^{\prime}$ is regular.
The closed subscheme $Y^{\prime}$ is regular of pure codimension one in
$\overline X^{\prime}$. Thus $I_{Y^{\prime}}$ is locally principal.
Whence the Proposition.
\end{proof}

\begin{proof}[Proof of Proposition~\ref{CartesianDiagram}]
To prove this Proposition it suffices to construct a finite surjective $S$-morphism
$$\bar \pi: \overline X \to \Pro^1 \times S$$
such that
$Y_{red}=\bar \pi^{-1}(\{\infty\} \times S )$
set-theoretically. To do that, we first note that, under the hypotheses of the Proposition,
the closed subscheme $Y$ of $\overline X$
is a locally principal divisor.
We will construct a desired $\bar \pi$ using two sections $t_0$ and $t_1$ of the sheaf
$\mathcal O(nY)$ for a sufficiently large $n$. Assume that $t_0$ and $t_1$ are
such that the vanishing locus of $t_0$ is $nY$ and the vanishing locus of $t_1$ does not intersect $Y$.
Then the pair $t_0,t_1$ defines a regular map
$\varphi:=[t_0:t_1]: \overline X \to \Pro^1$.
Set
$\bar \pi = (\varphi,\bar p): \overline X \to \Pro^1 \times S$.
Clearly,
$\bar \pi$ is an $S$-morphism of the $S$-schemes. It is a projective morphism
since both $S$-schemes are projective $S$-schemes. It is a quasi-finite surjective morphism.
In fact, for each point $s \in S$ the morphism $\bar \pi$ induces a non-constant morphism
$\overline X_s \to \Pro^1_{s}$
of two $k(s)$-smooth geometrically irreducible projective $k(s)$-curves.
Thus $\bar \pi$ is finite surjective as a quasi-finite projective morphism.
It remains to find an appropriate integer $n$ and two sections $t_0$ and $t_1$ with the above properties.

Firstly, for each point $s$ of the scheme $S$ set
$\overline X_s:=(\bar \pi)^{-1}(s)$
scheme-theoretically, and note that
$\overline X_s$ is a
$k(s)$-smooth geometrically irreducible projective $k(s)$-curve. The morphism $\bar \pi$ is smooth.
In particular, it is flat. Whence the function
$s \mapsto \chi(\overline X_s, \mathcal O_{\overline X_s})$
is constant by~\cite[Ch. II, Sect. 5, Cor. 1]{Mu}. The latter means that the genus
$g(\overline X_s)$
is the same for all points $s \in S$. Set
$g = g(\overline X_s)$.
By the assumption $Y$ is finite flat over $S$ and $S$ is semi-local. Let $r$ be the rank of the free
$\Gamma(S, \mathcal O_S)$-module $\Gamma(Y, \mathcal O_Y)$.

Assume that $n \geq 2g-1$, then
$h^0(\overline X_s, \mathcal O_{\overline X_s}(nY_s))=\chi(\overline X_s, \mathcal O_{\overline X_s}(nY_s))=rn-g+1$.
Let
$\mathcal E_n:= \bar p_*(\mathcal O_{\overline X_s}(nY_s)))$.
By~\cite[Ch. II, Sect. 5, Cor. 1]{Mu} and~\cite[Ch. II, Sect. 5, Lem. 1]{Mu}
the sheaf $\mathcal E_n$ on $S$
is locally free of rank
$rn-g+1$,
and for each point $s \in S$ the canonical map
$\mathcal E_n \otimes_{\mathcal O_S} k(s) \xrightarrow{can} H^0(\overline X_s, \mathcal O_{\overline X_s}(nY_s))$
is an isomorphism.

Let $s = \coprod s_i$, where $s_i$ are all closed points of the semi-local scheme $S$. Let
$k(s) = \prod k(s_i)$, where $k(s_i)$ denotes the residue field of the point $s_i$.
Consider the commutative diagram
\begin{equation}
\label{Diagram3}
    \xymatrix{
H^0(S, \mathcal E_n) \ar[rr]^{\id}\ar[d]_{\alpha} && H^0(\overline X, \mathcal O_{\overline X}(nY)) \ar[d]^{\beta}& \\
\mathcal E_n \otimes_{\mathcal O_S} k(s) \ar[rr]^{\can} && H^0(\overline X_s, \mathcal O_{\overline X_s}(nY_s)),      & \\  }
\end{equation}
where $\alpha$, $\beta$, and $\can$ are the canonical homomorphisms.
As mentioned in the previous paragraph, the map $\can$ is an isomorphism.
The map $\alpha$ is surjective, since $s=\coprod s_i$ is a closed subscheme of the affine scheme $S$.
Whence the map $\beta$ is surjective.

For each $s_i \in s$ the curve $\overline X_{s_i}$ is a $k(s_i)$-smooth geometrically irreducible $k(s_i)$-curve of genus $g$.
Whence there exists an integer $n_0$ such that for each $n \geq n_0$ and each $s_i \in s$  has
$H^1(\overline X_{s_i}, \mathcal O((n-1)Y_{s_i}))=0$. Thus
there exists for any $i$ a section
$t_{1,i}$ of $\mathcal O_{\overline X_{s_i}}(nY_{s_i})$ that does not vanish on $Y_{s_i}$.
By the surjectivity of $\beta$ we may choose a section
$t_1$ of $\mathcal O_{\overline X}(nY)$
such that
$\beta(t_1)|_{\overline X_{s_i}}=t_{1,i}$ for any $i$.
The vanishing locus of $t_1$ does not intersect $Y_s$, whence it does not intersect $Y$.
Clearly, $t_1$ is the desired section of
$\mathcal O_{\overline X}(nY)$. It remains to take for $t_0$ a section of $\mathcal O_{\overline X}(nY)$
with the vanishing locus $nY$.

\end{proof}

\subsection{Proof of the Horrocks type Theorem~\ref{ConstantOnFibreIsConstant}}\label{horrockstypetheorem}
In this section we give a proof of Theorem~\ref{ConstantOnFibreIsConstant}.
This proof is rather standard,
and for the most part follows \cite{R1}. However, our group scheme
$G$ does not come from the ground field $k$. Therefore, we have to
somewhat modify Raghunathan's arguments. We will use the following lemma.


\begin{lem}
\label{Horrocks}
Let $W$ be a semi-local irreducible Noetherian scheme over an arbitrary field $k$.
Let $H$ and $H'$ be two reductive group schemes over $W$, such that $H$ is a closed $W$-subgroup scheme of $H'$,
and denote by $j: H \hra H^{\prime}$ the corresponding embedding. Denote by $\Pro^1_W$ the projective line over
$W$.

Let $F \in H^1(\Pro^1_W, H)$
be a principal $H$-bundle, and let
$M :=j_*(F) \in H^1(\Pro^1_W, H^{\prime})$
be the corresponding principal $H^{\prime}$-bundle.
If $M$ is a trivial $H^{\prime}$-bundle, then there exists a principal
$H$-bundle $F_0$ over $W$ such that
$pr^*(F_0) \cong F$,
where
$pr: \Pro^1_W \to W$
is the canonical projection.
\end{lem}
\begin{proof}
Set $X= H^{\prime}/j(H)$. Locally in the \'{e}tale topology on $W$
this scheme is isomorphic to the $W$-scheme $W \times_{\Spec(k)} H^{\prime}_{0,k}/H_{0,k}$, where
$H_{0,k}$ and $H^{\prime}_{0,k}$ are
the split reductive $k$-group schemes of the same types as $H$ and $H^{\prime}$ respectively.
By results of Haboush
\cite{Hab}
and Nagata
\cite{Na}
(see Nisnevich \cite[Corollary]{Ni1})
the $k$-scheme
$H^{\prime}_{0,k}/H_{0,k}$
is an affine $k$-scheme. Thus $X$ is an affine $W$-scheme.
Consider the long exact sequence of pointed sets
$$1 \to H(\Pro^1_W) \xra{j_*} H^{\prime}(\Pro^1_W) \to X(\Pro^1_W)
\xra{\partial} H^1_{\text{\'{e}t}}(\Pro^1_W, H) \xra{j_*} H^1_{\text{\'{e}t}}(\Pro^1_W, H^{\prime}).$$
Since $j_*(F)$ is trivial, there is $\varphi \in X(\Pro^1_W)$ such that
$\partial (\varphi)= F$.

The $W$-morphism $\varphi: \Pro^1_W \to X$ is a $W$-morphism of a $W$-projective scheme to a $W$-affine scheme.
Thus $\varphi$ is "constant", that is, there exists a section $s: W \to X$ such that
$\varphi= s \circ pr$.
Consider another long exact sequence of pointed sets, this time the one corresponding to the scheme $W$,
and the morphism of the first sequence to the second one induced by the projection $pr$. We get a big commutative diagram.
In particular, we get the following commutative square
\begin{equation}
\label{Representation1}
    \xymatrix{
X(W) \ar[d]_{pr^*_W} \ar[rr]^{\partial} && H^1_{\text{\'{e}t}}(W, H) \ar[d]^{pr^*_W} \\
X(\Pro^1_W) \ar[rr]^{\partial} && H^1_{\text{\'{e}t}}(\Pro^1_W, H).       \\  }
\end{equation}
We have $\pr^*_W(s)=\varphi$. Hence
$$F= \partial (\varphi) = \partial (\pr^*_W(s)) = pr^*_W ( \partial (s)).$$
Setting $F_0=\partial (s)$ we see that
$F= pr^*_W ( F_0)$. The Lemma is proved.

\end{proof}

\begin{proof}[Proof of Theorem~\ref{ConstantOnFibreIsConstant}]
It is routine to prove that there is a closed $B^{\prime}$-group scheme embedding
$j: G \hra GL_{N,B^{\prime}}$
for an $N > 0$. By the assumption of the theorem, the $G$-bundle $E$ is trivial on
$\Pro^1_l \subset \Pro^1$. Hence the $GL_{N,B^{\prime}}$-bundle $j_*(E)$ over $\Pro^1$
is trivial over $\Pro^1_l$.

The $B^{\prime}$-group scheme
$GL_{N,B^{\prime}}$
is just the ordinary general linear group.
Thus $j_*(E)$ corresponds to a vector bundle $M$ over $\Pro^1$.
Moreover, this vector bundle is trivial on
$\Pro^1_l$.
Using the equality
$H^1(\Pro^1, \mathcal O_{\Pro^1})=0$
and
\cite[Cor. 4.6.4]{EGAIII1},
we see that $M$ is of the form $M=pr^*(M_0)$
for a vector bundle $M_0$ over $\Spec(B^{\prime})$.
Since $B^{\prime}$ is semi-local, $M_0$ is trivial over $\Spec(B^{\prime})$.
Thus $M$ is trivial on $\Pro^1$.
Thus
$j_*(F)$ is a trivial $GL_{N,B^{\prime}}$-bundle.

Now, applying Lemma~\ref{Horrocks} to the embedding
$j:G \hra GL_{N,B^{\prime}}$,
we see that $E=pr^*(E_0)$ for some
$E_0 \in H^1(B^{\prime}, G)$.
Theorem~\ref{ConstantOnFibreIsConstant} is proved.

\end{proof}


\end{document}